\newtheorem{definition}{Definition}[section]
\newtheorem{theorem}[definition]{Theorem}
\newtheorem{lemma}[definition]{Lemma}
\newtheorem{corollary}[definition]{Corollary}
\newtheorem{proposition}[definition]{Proposition}
\newcommand{\newreptheorem}[2]{\newtheorem*{rep@#1}{\rep@title}\newenvironment{rep#1}[1]{\def\rep@title{#2 \ref*{##1}}\begin{rep@#1}}{\end{rep@#1}}}
\theoremstyle{definition}
\newtheorem{example}[definition]{Example}
\newtheorem{nonexample}[definition]{Nonexample}
\newtheorem{remark}[definition]{Remark}
\newcommand{\gal}{\mathrm{Gal}}
\newcommand{\nodiv}{\!\!\not|\,}
\newcommand{\rank}{\mathrm{rank}}
\newcommand{\ord}{\mathrm{ord}}
\newcommand{\im}{\mathrm{im}}
\newcommand{\cl}{\mathrm{Cl}}
    \DeclareFontFamily{U}{wncy}{}
    \DeclareFontShape{U}{wncy}{m}{n}{<->wncyr10}{}
    \DeclareSymbolFont{mcy}{U}{wncy}{m}{n}
    \DeclareMathSymbol{\Sh}{\mathord}{mcy}{"58}
\title{Two-descent on some genus two curves}
\author[T.~Evink]{Tim Evink}
\address{Institute of Algebra and Number Theory, Ulm University, Helmholtzstr.~18, 89081 Ulm, Germany.}
\email{tim.evink@uni-ulm.de}
\author[G.J.~van der Heiden]{Gert-Jan van der Heiden}
\address{Center for Contemporary European Philosophy,
P.O.~Box~9103, 6500 HD Nijmegen, The Netherlands.}
\email{g.vanderheiden@ftr.ru.nl}
\author[J.~Top]{Jaap Top}
\address{Bernoulli Institute, University of Groningen,
Nijenborgh~9, 9747 AG Groningen, the Netherlands.}
\email{j.top@rug.nl}
\begin{document}

\maketitle

\section{Introduction}

In this paper we study for an arbitrary prime number $p$ the curve $C_p$ of genus $2$ defined by the equation
\begin{equation}\label{eq:thecurve}
y^2=x(x^2-p^2)(x^2-4p^2).
\end{equation}
Specifically, we start by bounding the rank of its Jacobian $J_p$ over $\mathbb{Q}$ in terms of the $2$-Selmer group $S^2(J_p/\mathbb{Q})$. Next we show for three infinite 
sets of prime numbers $p$ how to improve the upper bound on $\rank\,J_p(\mathbb{Q})$ by using a $2$-Selmer group computation over 
$\mathbb{Q}(\sqrt{\pm p})$ of the Jacobian of the curve 
$C=C_1$ defined by 
$y^2=x(x^2-1)(x^2-4)$.
This computation applies the R\'edei symbols of \cite{stevenhagen2018redei}.  
The improved
upper bound yields cases where the
Shafarevich-Tate group $\Sh(J_p/\mathbb{Q})$ is nontrivial.
As an example: for primes $p\equiv 23\bmod 48$ it turns out
that $J_p(\mathbb{Q})$ is finite and $\Sh(J_p/\mathbb{Q})[2]\cong(\mathbb{Z}/2\mathbb{Z})^2$.

We also discuss the $\mathbb{Q}$-rational points of the curve $C_p$. This is easy in
case the group $J_p(\mathbb{Q})$ is finite (as occurs,
for example, for all primes $p\equiv 7\bmod 24$). A less
obvious case we treat is $p=241$; the group
$J_{241}(\mathbb{Q})$ turns out
to have rank $2$. Using so-called `Two-Selmer sets', it is shown that
$C_{241}(\mathbb{Q})$ consists of only the
obvious Weierstrass points (the one at infinity and the ones with $y=0$).

Studying genus $1$ curves depending on a prime number $p$ is
a very classical subject; the survey paper \cite{Na29} 
already lists
various examples; more recent ones are found, e.g.,  
in \cite{BrCa1984}, \cite{Mo1992}, \cite{StrTo1994}. 
The natural question of investigating analogous ideas 
in the case of genus $2$
curves so far seems to have obtained less attention.
The 1998 master's thesis \cite{heiden} by one of us provides a first step
(not yet involving Shafarevich-Tate groups). As shown in
 {\sl loc.\ sit.} Prop.~4.3.3 and Thm.~4.3.4, this already
suffices to conclude for the curves discussed in the
present paper that $C_p(\mathbb{Q})$ consists of the $6$
Weierstrass points only, whenever $p\equiv 7\bmod 24$.
The recent preprint \cite{hima2019} studies some similar
families of genus $2$ curves, but with only $2$ rational
Weierstrass points. Again, computing the $2$-Selmer group
over $\mathbb{Q}$ allow the authors to identify congruence conditions on
the prime $p$ such that the corresponding Mordell-Weil group
is finite. As a consequence, for those primes the only rational points on
the curve are the rational Weierstrass points.

Many results in the present paper originate from two
master's projects \cite{heiden}, \cite{evink} (1998
resp. 2019) by the second and the first author,
supervised by the third one.

\section{Notation and results}\label{NotandRes}
The first step in order to obtain information on the rank of Jacobian $J_p$ of the hyperelliptic curve $C_p$ defined by the equation
\[
y^2=x(x^2-p^2)(x^2-4p^2)
\]
for a prime $p$, is the relatively basic computation of the $2$-Selmer group of $J_p/\mathbb{Q}$. It fits in the well know short exact sequence
\begin{equation}\label{eq:2selmerexact}
0\to J_p(\mathbb{Q})/2J_p(\mathbb{Q})\to S^2(J_p/\mathbb{Q})\to\Sh(J_p/\mathbb{Q})[2]\to 0.
\end{equation}
This Selmer group was computed in \cite{heiden} (with
minor corrections in \cite[Appendix~B]{evink}). 
The computation is based on the method described in
\cite{schaefer1995} and uses (see \cite[Section~7]{FPS1997})
\[
\#J_p(K_v)/2J_p(K_v)=|2|_v^{-2}\cdot \#J_p(K_v)[2]
=|2|_v^{-2}\cdot 16
\]
where $K_v\supset\mathbb{Q}_\ell$ is a
finite extension with
valuation ring $O_v$, and \[|2|_v=\text{vol}(2O_v)/\text{vol}(O_v)=\left\{
\begin{array}{ll}1 & \text{if}\;\ell\neq 2,\\
2^{-[K_v:\mathbb{Q}_2]} & \text{if}\;\ell=2\end{array}\right..
\]The result is as follows. (A calculation illustrating this type of result is the proof of
Lemma~\ref{lem5.1} below.)
\begin{proposition}\label{prop2.1}
For a prime number $p>3$, the $\mathbb{F}_2$-vectorspace
$S^2(J_p/\mathbb{Q})$ of the Jacobian $J_p$ of the curve
defined by $y^2=x(x^2-p^2)(x^2-4p^2)$ has dimension as given
in the next table.
\[
\begin{array}{c|c}
   p \bmod 24  & \dim_{\mathbb{F}_2}S^2(J_p/\mathbb{Q}) \\
   \hline 
    1 & 8 \\
    5,11,13,19 & 5 \\
    7 & 4 \\
    17,23 & 6
\end{array}
\]
\end{proposition}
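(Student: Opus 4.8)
The plan is to carry out an explicit $2$-descent on $J_p$ following the method of Schaefer \cite{schaefer1995}, exploiting the fact that $f(x)=x(x-p)(x+p)(x-2p)(x+2p)$ splits completely over $\mathbb{Q}$. Writing $L=\mathbb{Q}[x]/(f)\cong\mathbb{Q}^5$, indexed by the roots $e\in\{0,\pm p,\pm2p\}$, the two-torsion $J_p[2]$ is generated by the (rational) differences of Weierstrass points, hence is a trivial Galois module $\cong(\mathbb{Z}/2)^4$. The $x-T$ map then identifies $H^1(\mathbb{Q},J_p[2])$ with the norm-kernel
\[
H=\ker\!\Big(N:\ \big(\mathbb{Q}^*/(\mathbb{Q}^*)^2\big)^5\to\mathbb{Q}^*/(\mathbb{Q}^*)^2\Big),
\]
where $N$ is the product of the five coordinates, and analogously at each completion. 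The first step is to record the images under this map of the Weierstrass classes $(e,0)$: each has the product $\prod_{e'\neq e}(e-e')$ in the appropriate coordinate, and since every such difference is $\pm$ a product of $2,3,p$, this fixes the arithmetic input.

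The second step is to pin down the global group. The discriminant of $f$ is divisible only by $2,3,p$ — the difference $2p-(-p)=3p$ is what forces $3$ into the bad set, and indeed $J_p$ has bad reduction at $3$ since mod $3$ the roots collapse to $0,\pm p$ each with multiplicity. Thus every Selmer class is unramified outside $S=\{2,3,p,\infty\}$ and lies in $H_S$, the $S$-supported part of $H$. Each coordinate is then constrained to $\langle-1,2,3,p\rangle\subseteq\mathbb{Q}^*/(\mathbb{Q}^*)^2$ of $\mathbb{F}_2$-dimension $4$, so $(\mathbb{Q}_S^*/(\mathbb{Q}_S^*)^2)^5$ has dimension $20$, and the norm condition imposes $4$ independent linear conditions, giving $\dim_{\mathbb{F}_2}H_S=16$.

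The third and central step is to compute, at each $v\in S$, the local image $I_v=\operatorname{im}(\mu_v)\subseteq L_v^*/(L_v^*)^2$. The cardinality formula quoted above gives $\dim_{\mathbb{F}_2}I_v=\log_2\#J_p(\mathbb{Q}_v)/2J_p(\mathbb{Q}_v)$, namely $4$ for each odd $v$ and $6$ for $v=2$, together with the standard value $2$ for the real place (all Weierstrass points being real); in every case this is exactly half of $\dim H^1(\mathbb{Q}_v,J_p[2])=4\cdot\dim_{\mathbb{F}_2}\mathbb{Q}_v^*/(\mathbb{Q}_v^*)^2$, so each $I_v$ is maximal isotropic for the local cup-product pairing. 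This both serves as a consistency check and pins down $I_v$ once enough of its elements — the images of the local points $(e,0)$ and of a few auxiliary points generating $J_p(\mathbb{Q}_v)/2J_p(\mathbb{Q}_v)$ — are known; for the odd good primes the condition is vacuous on $H_S$. The Selmer group is then $S^2(J_p/\mathbb{Q})=\{\delta\in H_S:\operatorname{res}_v(\delta)\in I_v\ \forall v\in S\}$, a dimension obtained by a finite $\mathbb{F}_2$ linear-algebra computation.

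The dependence on $p\bmod24$ — and the main obstacle — lies entirely in this last computation: whether $-1,2,3,p$ are squares in each $\mathbb{Q}_v^*$, equivalently the symbols $\left(\tfrac{2}{p}\right),\left(\tfrac{3}{p}\right),\left(\tfrac{-1}{p}\right)$ determined by $p\bmod24$, governs the precise subspaces $I_2,I_3,I_p$ and hence the rank of the system of local conditions. I expect the place $v=2$ to be the hardest, since $\mathbb{Q}_2^*/(\mathbb{Q}_2^*)^2$ has dimension $3$ and one must identify a $6$-dimensional image inside a $12$-dimensional space by a careful analysis of $J_p(\mathbb{Q}_2)$; the remaining labour is to run the intersection separately for each of the eight residue classes in $(\mathbb{Z}/24)^*$ and collect the resulting dimensions into the stated table.
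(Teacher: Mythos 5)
Your proposal follows essentially the same route as the paper: Schaefer's descent via the norm kernel in $(\mathbb{Q}^{*}/\mathbb{Q}^{*2})^5$, the bound by the $16$-dimensional subgroup supported on $S=\{2,3,p,\infty\}$, and local images of dimension $4$, $4$, $6$, $2$ at $p$, $3$, $2$, $\infty$ from $\#J_p(K_v)/2J_p(K_v)=|2|_v^{-2}\cdot 16$, with the dependence on $p\bmod 24$ entering exactly through the quadratic characters of $-1,2,3$ modulo $p$ --- all of which matches the computation the paper cites from \cite{heiden} and \cite{evink} and illustrates in Lemma~\ref{lem5.1}. The only caveat is that, much like the paper's own text, you stop at the (correct) blueprint: the case-by-case $\mathbb{F}_2$ linear algebra over the eight residue classes, which is what actually yields the table entries $8,5,4,6$, is announced rather than executed.
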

Since all Weierstrass points on $C_p$ are
$\mathbb{Q}$-rational, one has
$J_p(\mathbb{Q})[2]\cong(\mathbb{Z}/2\mathbb{Z})^4$. Either
by observing that in the present situation $J_p(\mathbb{Q})[2]\hookrightarrow S^2(J_p/\mathbb{Q})$, 
or using that the torsion subgroup $J_p(\mathbb{Q})_{\mbox{\scriptsize tor}}\subset J_p(\mathbb{Q})$ yields a $4$-dimensional subspace
$J_p(\mathbb{Q})_{\mbox{\scriptsize tor}}/2J_p(\mathbb{Q})_{\mbox{\scriptsize tor}}$ 
of $J_p(\mathbb{Q})/2J_p(\mathbb{Q})$, 
the short exact sequence \eqref{eq:2selmerexact} implies
\begin{equation}\label{eq:rankshasel}
\rank\,J_p(\mathbb{Q})+\dim_{\mathbb{F}_2}\Sh(J_p/\mathbb{Q})[2]=\dim_{\mathbb{F}_2}S^2(J_p/\mathbb{Q})-4.
\end{equation}
We state an immediate consequence of this:
\begin{corollary}\label{cor2.2}
For any prime number $p\equiv 7\bmod 24$ one has 
$\rank\,J_p(\mathbb{Q})=0$ and $C_p(\mathbb{Q})$ consists of
only the $6$ Weierstrass points of $C_p$.
\end{corollary}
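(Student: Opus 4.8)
The plan is to treat the two assertions separately: the rank bound drops out immediately from the results already stated, while the statement on rational points requires pinning down the full group $J_p(\mathbb{Q})$ and then applying an Abel--Jacobi argument.

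First I would read off the rank. For $p\equiv 7\bmod 24$ Proposition~\ref{prop2.1} gives $\dim_{\mathbb{F}_2}S^2(J_p/\mathbb{Q})=4$, so the right-hand side of \eqref{eq:rankshasel} is $4-4=0$. As both summands on the left are non-negative integers, this forces $\rank\,J_p(\mathbb{Q})=0$ (and, incidentally, $\Sh(J_p/\mathbb{Q})[2]=0$). In particular $J_p(\mathbb{Q})$ is finite and equal to its torsion subgroup, and we already know $J_p(\mathbb{Q})[2]\cong(\mathbb{Z}/2\mathbb{Z})^4$.

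The key step is to show, uniformly in $p$, that there is no further torsion, i.e.\ that $J_p(\mathbb{Q})=J_p(\mathbb{Q})[2]$. Here I would reduce modulo $5$ (note $p\neq 5$ since $5\not\equiv 7\bmod 24$). The defining polynomial depends on $p$ only through $p^2\equiv 1$ or $4\pmod 5$, and in either case its roots $0,\pm p,\pm 2p$ reduce to the five distinct elements of $\mathbb{F}_5$, so the reduction is the single $p$-independent curve $\overline{C}:y^2=x^5-x$, which is smooth of genus $2$. A short point count gives $\#\overline{C}(\mathbb{F}_5)=\#\overline{C}(\mathbb{F}_{25})=6$ (over $\mathbb{F}_5$ every $x$ satisfies $x^5=x$; over $\mathbb{F}_{25}$ each nonzero value $x^5-x$ has order $8$ in $\mathbb{F}_{25}^\times$, hence is a nonsquare), whence the numerator of the zeta function is $(1-5T^2)^2$ and $\#J_p(\mathbb{F}_5)=16$. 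Since $5$ is odd and unramified, the kernel of reduction is a torsion-free formal group, so $J_p(\mathbb{Q})_{\mathrm{tor}}\hookrightarrow J_p(\mathbb{F}_5)$ and $\#J_p(\mathbb{Q})\mid 16$. Combined with the $16$ elements of $J_p(\mathbb{Q})[2]$ this yields $J_p(\mathbb{Q})=J_p(\mathbb{Q})[2]\cong(\mathbb{Z}/2\mathbb{Z})^4$.

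Finally I would transfer this to the curve via the Abel--Jacobi embedding $\iota\colon C_p(\mathbb{Q})\hookrightarrow J_p(\mathbb{Q})$, $P\mapsto[P-\infty]$, based at the rational Weierstrass point at infinity. Every element of $J_p(\mathbb{Q})$ is now $2$-torsion, so each rational point satisfies $2[P-\infty]=0$, i.e.\ $2P\sim 2\infty$. As $P+\sigma P\sim 2\infty$ always holds for the hyperelliptic involution $\sigma$, this gives $P\sim\sigma P$, forcing $P=\sigma P$; that is, $P$ is a Weierstrass point. There are exactly six rational Weierstrass points (the point at infinity and the five affine ones with $y=0$), so $C_p(\mathbb{Q})$ consists precisely of these. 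The main obstacle is the uniform control of the torsion, since a priori $J_p(\mathbb{Q})$ could acquire points of order $4$ or of odd order and the naive reduction bound depends on $p\bmod\ell$; what makes the argument clean is that reduction modulo $5$ produces the \emph{fixed} curve $y^2=x^5-x$ for every $p\neq 5$, so that the bound $\#J_p(\mathbb{Q})\mid 16$ is genuinely independent of $p$.
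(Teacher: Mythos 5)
Your proof is correct and follows essentially the same route as the paper: the rank bound from Proposition~\ref{prop2.1} together with \eqref{eq:rankshasel}, torsion control via injectivity of reduction at the odd good prime $5$ where the curve becomes the fixed curve $y^2=x^5-x$ with $\#J_p(\mathbb{F}_5)=16$, and the Abel--Jacobi embedding forcing every rational point into the $2$-torsion, hence a Weierstrass point. You merely spell out details the paper leaves implicit (the point count over $\mathbb{F}_5$ and $\mathbb{F}_{25}$ giving zeta numerator $(1-5T^2)^2$, and the argument $P\sim\sigma P\Rightarrow P=\sigma P$), and you correctly observe that $p=5$ is excluded by the congruence, so the paper's auxiliary counts $\#J_5(\mathbb{F}_7)$ and $\#J_5(\mathbb{F}_{11})$ are not needed for this statement.
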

\begin{proof}
The proof of the statement about the rank is indicated above.
Note that for $p\neq 5$ one has $\#J_p(\mathbb{F}_5)=16$ independent of $p$. Moreover $\#J_5(\mathbb{F}_7)=48$ and
$\#J_5(\mathbb{F}_{11})=128$. Since for primes $\ell\geq 3$ the reduction mod $\ell$
map is an injective group homomorphism on rational torsion points,
it follows that $J_p(\mathbb{Q})$ has torsion subgroup
$(\mathbb{Z}/2\mathbb{Z})^4$ for every prime $p$.
Embedding $C_p\subset J_p$ via $P\mapsto [P-\infty]$ with
$\infty\in C_p$ the Weierstrass point at infinity, one
concludes that $C_p\cap J_p(\mathbb{Q})$ consists of
the divisor classes $[W-\infty]$ for $W$ any Weierstrass point
on $C_p$, implying the result. 
\end{proof}

For the primes $p\equiv 5, 11, 13, 19\bmod 24$ 
the structure of the group $J_p(\mathbb{Q})$
is in fact also predicted by Proposition~\ref{prop2.1}:

\begin{corollary}
For any prime $p>3$, assume that $\Sh(J_p/\mathbb{Q})$ is finite. Then
\[\text{rank}\,J_p(\mathbb{Q})\equiv
\left\{
\begin{array}{lcl}
 1\bmod 2 &&\text{if}\;\;p\equiv 5, 11, 13, 19\bmod 24;\\
 0\bmod 2 && otherwise.
\end{array}
\right.
\]
In particular, if for a prime $p\equiv 5, 11, 13, 19\bmod 24$
the group $\Sh(J_p/\mathbb{Q})$ is finite then for this prime
$J_p(\mathbb{Q})\cong \mathbb{Z}\times (\mathbb{Z}/2\mathbb{Z})^4$.
\end{corollary}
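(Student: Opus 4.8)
The plan is to read off both the parity statement and the precise structure directly from the exact formula \eqref{eq:rankshasel} and Proposition~\ref{prop2.1}, the only substantial extra ingredient being that finiteness of $\Sh(J_p/\mathbb{Q})$ forces $\dim_{\mathbb{F}_2}\Sh(J_p/\mathbb{Q})[2]$ to be even. First I would rewrite \eqref{eq:rankshasel} as
\[
\rank\,J_p(\mathbb{Q})=\bigl(\dim_{\mathbb{F}_2}S^2(J_p/\mathbb{Q})-4\bigr)-\dim_{\mathbb{F}_2}\Sh(J_p/\mathbb{Q})[2],
\]
and then consult the table of Proposition~\ref{prop2.1}: the quantity $\dim_{\mathbb{F}_2}S^2(J_p/\mathbb{Q})-4$ equals $4,\,1,\,0,\,2$ according as $p\equiv 1;\ 5,11,13,19;\ 7;\ 17,23\bmod 24$, so it is odd exactly for $p\equiv 5,11,13,19\bmod 24$ and even otherwise. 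Thus the asserted parity of $\rank\,J_p(\mathbb{Q})$ follows the instant I know $\dim_{\mathbb{F}_2}\Sh(J_p/\mathbb{Q})[2]$ is even.

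To get the parity of $\dim_{\mathbb{F}_2}\Sh[2]$ I would invoke the Cassels--Tate pairing. As the Jacobian of $C_p$, the variety $J_p$ carries the canonical principal polarization attached to the theta divisor; and because $C_p$ is given by the odd-degree model $y^2=f(x)$ with $\deg f=5$, the rational Weierstrass point $\infty$ satisfies $2[\infty]\sim K_{C_p}$, so $[\infty]$ is a $\mathbb{Q}$-rational theta characteristic and the associated theta divisor is symmetric and defined over $\mathbb{Q}$. By the analysis of Poonen and Stoll of the Cassels--Tate pairing on Jacobians, this makes the pairing on $\Sh(J_p/\mathbb{Q})$ \emph{alternating}. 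Assuming $\Sh(J_p/\mathbb{Q})$ finite, the pairing is moreover nondegenerate, and it restricts to a nondegenerate alternating form on the $2$-primary part $\Sh(J_p/\mathbb{Q})[2^\infty]$; by the structure theorem for finite abelian groups with such a form, this part is an orthogonal sum of hyperbolic planes $\mathbb{Z}/2^{a}\oplus\mathbb{Z}/2^{a}$, whence $\dim_{\mathbb{F}_2}\Sh(J_p/\mathbb{Q})[2]$ is even. Together with the previous paragraph this gives the stated congruences for $\rank\,J_p(\mathbb{Q})$.

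For the final assertion I would specialize to $p\equiv 5,11,13,19\bmod 24$, where the displayed formula reads $\rank\,J_p(\mathbb{Q})+\dim_{\mathbb{F}_2}\Sh(J_p/\mathbb{Q})[2]=1$. Since both summands are nonnegative integers and the second is even, the only possibility is $\dim_{\mathbb{F}_2}\Sh(J_p/\mathbb{Q})[2]=0$ and $\rank\,J_p(\mathbb{Q})=1$. Combining this with the torsion computation already carried out in the proof of Corollary~\ref{cor2.2}, namely $J_p(\mathbb{Q})_{\mathrm{tor}}\cong(\mathbb{Z}/2\mathbb{Z})^4$, yields $J_p(\mathbb{Q})\cong\mathbb{Z}\times(\mathbb{Z}/2\mathbb{Z})^4$.

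The main obstacle is the claim that the Cassels--Tate pairing is alternating: in general (Poonen--Stoll) the order of $\Sh$ of a Jacobian need only be a square or twice a square, and the alternating property can genuinely fail. The point to verify carefully is therefore that the rational Weierstrass point of $C_p$ really does produce an alternating pairing here, i.e.\ that the relevant local Poonen--Stoll obstructions sum to zero, rather than merely forcing $\#\Sh$ to be a square; the latter alone would not bound the $2$-rank, as already $\Sh[2^\infty]\cong\mathbb{Z}/4\mathbb{Z}$ shows. Everything else is bookkeeping with \eqref{eq:rankshasel} and the table of Proposition~\ref{prop2.1}.
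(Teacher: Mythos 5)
Your proposal is correct and follows essentially the same route as the paper: rewrite \eqref{eq:rankshasel}, read off parities from the table in Proposition~\ref{prop2.1}, invoke Poonen--Stoll for the evenness of $\dim_{\mathbb{F}_2}\Sh(J_p/\mathbb{Q})[2]$ (the paper cites \cite[\S6, \S8]{poonenstoll1999} directly, noting only that $\Sh$ is finite and $C_p$ has a rational point, whereas you unwind the mechanism --- the rational point kills the obstruction class, making the Cassels--Tate pairing alternating and forcing a hyperbolic structure on $\Sh[2^\infty]$), and conclude with the torsion computation from the proof of Corollary~\ref{cor2.2}. The worry you flag at the end is resolved exactly as you suspect: since $C_p(\mathbb{Q})\neq\emptyset$, the Poonen--Stoll class $c\in\Sh$ vanishes, so the pairing is genuinely alternating and not merely of square order.
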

\begin{proof}
By a result of Poonen and Stoll \cite[\S6, \S8]{poonenstoll1999}
finiteness of $\Sh$ and the fact that $C_p$
contains a rational point, implies that 
$\dim_{\mathbb{F}_2}\Sh(J_p/\mathbb{Q})[2]$ is even. 
Hence Equation \eqref{eq:rankshasel}
and Proposition~\ref{prop2.1} imply the first assertion
as well as $\rank\,J_p(\mathbb{Q})=1$
whenever $p\equiv 5, 11, 13, 19\bmod 24$.
The result follows since the proof of
Corollary~\ref{cor2.2} in particular determines the torsion
subgroup of $J_p(\mathbb{Q})$.
\end{proof}
The remainder of this paper deals with improvements of
Proposition~\ref{prop2.1} and variations on Corollary~\ref{cor2.2}. Specifically, this is possible in all remaining
congruence classes (so, $p\equiv 1, 17, 23\bmod 24$). We show the following.
\begin{theorem}\label{Thm23mod24}
Let $p\equiv 23\bmod 48$ be a prime number. The Jacobian
$J_p$ of the curve corresponding to $y^2=x(x^2-p^2)(x^2-4p^2)$
satisfies $J_p(\mathbb{Q})=J_p(\mathbb{Q})[2]\cong
(\mathbb{Z}/2\mathbb{Z})^4$ and $\Sh(J_p/\mathbb{Q})[2]\cong
(\mathbb{Z}/2\mathbb{Z})^2$.
\end{theorem}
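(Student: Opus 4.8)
The plan is to reduce the theorem to the single assertion $\rank\,J_p(\mathbb{Q})=0$. The proof of Corollary~\ref{cor2.2} shows that the torsion subgroup of $J_p(\mathbb{Q})$ equals $(\mathbb{Z}/2\mathbb{Z})^4$ for every prime $p$, so rank zero at once gives $J_p(\mathbb{Q})=J_p(\mathbb{Q})[2]\cong(\mathbb{Z}/2\mathbb{Z})^4$. Moreover $p\equiv 23\bmod 48$ implies $p\equiv 23\bmod 24$, so Proposition~\ref{prop2.1} gives $\dim_{\mathbb{F}_2}S^2(J_p/\mathbb{Q})=6$, and then \eqref{eq:rankshasel} converts $\rank\,J_p(\mathbb{Q})=0$ into $\dim_{\mathbb{F}_2}\Sh(J_p/\mathbb{Q})[2]=2$, which is the remaining claim.

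To bound the rank I would pass to $K=\mathbb{Q}(\sqrt{p})$, over which $C_p$ becomes isomorphic to the fixed curve $C=C_1$: the substitution $x=pX$, $y=p^{5/2}Y$ transforms $y^2=x(x^2-p^2)(x^2-4p^2)$ into $Y^2=X(X^2-1)(X^2-4)$. Writing $J:=J_1$, this isomorphism of curves induces an isomorphism $J_p\cong J$ of Jacobians over $K$, so
\[
\rank\,J_p(\mathbb{Q})\le\rank\,J_p(K)=\rank\,J(K).
\]
It therefore suffices to prove $\rank\,J(K)=0$, i.e.\ to carry out a two-descent on the Jacobian of the single curve $C_1$ over the quadratic field $K$.

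The heart of the argument is thus the computation of $S^2(J/K)$, which I would perform by the two-descent method of \cite{schaefer1995}, \cite{FPS1997} underlying Proposition~\ref{prop2.1}, now over $K$. All six Weierstrass points of $C_1$ are $\mathbb{Q}$-rational, so $J(K)[2]\cong(\mathbb{Z}/2\mathbb{Z})^4$ and $\rank\,J(K)\le\dim_{\mathbb{F}_2}S^2(J/K)-4$; the goal is to prove $\dim_{\mathbb{F}_2}S^2(J/K)=4$. Here one exploits that $C_1$ carries the order-$4$ automorphism $(x,y)\mapsto(-x,iy)$, whose square is the hyperelliptic involution; it endows $J$ with $\mathbb{Z}[i]$-multiplication over $K(i)=\mathbb{Q}(\sqrt{p},\sqrt{-p})$. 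The local conditions cutting out $S^2(J/K)$ at the primes above $2$ and $p$, together with the unit and class-group data of $K$, then split into pieces governed by $\mathbb{Q}(\sqrt{p})$ and $\mathbb{Q}(\sqrt{-p})$, and these are precisely the norm-residue conditions recorded by the R\'edei symbols of \cite{stevenhagen2018redei}. The congruence $p\equiv 23\bmod 48$ is chosen so that all the relevant R\'edei symbols take the values forcing $\dim_{\mathbb{F}_2}S^2(J/K)=4$, whence $\rank\,J(K)=0$ and $\rank\,J_p(\mathbb{Q})=0$.

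The step I expect to be the main obstacle is exactly this Selmer computation over $K$: controlling the local images at the primes above $2$ in $K$ and in the biquadratic field $\mathbb{Q}(\sqrt{p},\sqrt{-p})$, where the fine congruence $p\equiv 23\bmod 48$ (rather than merely $\bmod 24$) is indispensable, and translating the resulting local conditions faithfully into R\'edei symbols so that they can be read off from the residue class of $p$ alone.
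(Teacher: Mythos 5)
Your reduction skeleton is the paper's: rank zero plus the torsion determination in the proof of Corollary~\ref{cor2.2} gives $J_p(\mathbb{Q})\cong(\mathbb{Z}/2\mathbb{Z})^4$, and Proposition~\ref{prop2.1} (dimension $6$ for $p\equiv 23\bmod 24$) together with \eqref{eq:rankshasel} converts rank zero into $\Sh(J_p/\mathbb{Q})[2]\cong(\mathbb{Z}/2\mathbb{Z})^2$; your inequality $\rank J_p(\mathbb{Q})\le\rank J(K)$ is a harmless variant of the paper's \eqref{eq:rankrelationtwists} combined with Lemma~\ref{lem5.1}. But the heart of the theorem --- proving $\dim_{\mathbb{F}_2}S^2(J/K)=4$ --- is only asserted in your proposal, and the sketch points in directions that would not work as written. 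First, you descend over $K=\mathbb{Q}(\sqrt{p})$, which for $p\equiv 23\bmod 24$ (so $p\equiv 3\bmod 4$) is the awkward choice: $2$ ramifies and $3$ is inert in $K$, the fundamental unit has norm $+1$, and Lemma~\ref{oddclassnmr} as stated concerns $\mathbb{Q}(\sqrt{p^*})=\mathbb{Q}(\sqrt{-p})$ here. The paper instead takes $K=\mathbb{Q}(\sqrt{-p})$, where $-p\equiv 1\bmod 8$ and $-p\equiv 1\bmod 3$ make both $2$ and $3$ split, so every completion at a prime of $S$ is $\mathbb{Q}_2$ or $\mathbb{Q}_3$ and the precomputed local-image tables apply, and $K$ imaginary removes unit and real-place complications; on your route you would need a new local table over the ramified field $K\otimes\mathbb{Q}_2=\mathbb{Q}_2(i)$, where $\#J(K_v)/2J(K_v)=2^8$, which the paper never computes. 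Second, the relevant local conditions sit at the primes above $2$ and $3$ (those dividing $2\Delta(f)=2^{11}\cdot 3^4$), not ``above $2$ and $p$'': $J$ has good reduction at $p$, and $p$ enters only through the arithmetic of $K$.

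The most substantive gap is your closing expectation that the relevant R\'edei symbols ``can be read off from the residue class of $p$ alone.'' That is false, and the actual proof succeeds for a different structural reason. The computation over $\mathbb{Q}(\sqrt{-p})$ (Table~(\ref{redeitable23}) and the ensuing case analysis) shows that $\dim_{\mathbb{F}_2}S^2(J/\mathbb{Q}(\sqrt{-p}))$ is governed by exactly two symbols, $[2,2,-p]$ and $[3,6,-p]$. Only the first is a congruence condition: by Example~\ref{redeiexample1}, $[2,2,-p]=-1$ iff $p\equiv 7\bmod 16$, which is precisely what strengthening $23\bmod 24$ to $23\bmod 48$ buys. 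The symbol $[3,6,-p]$ is governed by splitting in a fixed degree-$8$ dihedral field and is \emph{not} determined by any congruence on $p$; the theorem holds because the case table gives dimension $4$ for \emph{both} values of $[3,6,-p]$ once $[2,2,-p]=-1$. Your sketch contains no mechanism to discover or verify this insensitivity, and without it the claim that the congruence ``forces'' the Selmer dimension cannot be completed. Finally, the $\mathbb{Z}[i]$-multiplication on $J$ over $K(i)$ is real but plays no role in the paper's descent (which is pure $(x-\alpha_i)$-descent via Proposition~\ref{sunitbasis} and minimally ramified dihedral extensions plus R\'edei reciprocity), and you do not develop it into an argument; as it stands it is decoration on a step that remains unproved.
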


\begin{theorem}\label{Thm17mod24}
Let $p\equiv 17\bmod 24$ be a prime number that does not
split completely in $\mathbb{Q}(\sqrt[4]{2})$. The Jacobian
$J_p$ of the curve corresponding to $y^2=x(x^2-p^2)(x^2-4p^2)$
satisfies $J_p(\mathbb{Q})=J_p(\mathbb{Q})[2]\cong
(\mathbb{Z}/2\mathbb{Z})^4$ and $\Sh(J_p/\mathbb{Q})[2]\cong
(\mathbb{Z}/2\mathbb{Z})^2$.
\end{theorem}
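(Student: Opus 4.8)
The plan is to exploit the fact that $C_p$ is the quadratic twist by $p$ of the single curve $C=C_1$. Substituting $x=pu$ and $y=p^2w$ in \eqref{eq:thecurve} gives $w^2=p\,u(u^2-1)(u^2-4)$, so over $K:=\mathbb{Q}(\sqrt{p})$ (the field of discriminant $p$, since here $p\equiv 1\bmod 4$) the Jacobians satisfy $J_p\cong J$, where $J:=J_1$. Consequently
\[
\rank\,J_p(\mathbb{Q})=\rank\,J(K)-\rank\,J(\mathbb{Q}),
\]
the twist selecting the $(-1)$-eigenspace of $\gal(K/\mathbb{Q})$ on $J(K)\otimes\mathbb{Q}$. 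By Proposition~\ref{prop2.1} we have $\dim_{\mathbb{F}_2}S^2(J_p/\mathbb{Q})=6$ for $p\equiv 17\bmod 24$, and the torsion is already fixed as $(\mathbb{Z}/2\mathbb{Z})^4$, so \eqref{eq:rankshasel} reduces the entire theorem to proving $\rank\,J_p(\mathbb{Q})=0$; the assertion $\dim_{\mathbb{F}_2}\Sh(J_p/\mathbb{Q})[2]=2$ then follows at once.

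Next I would reduce this rank statement to a single $2$-descent over $K$. All six Weierstrass points of $C$ are already rational, so $J(K)[2]\cong(\mathbb{Z}/2\mathbb{Z})^4$, and the analogue of \eqref{eq:2selmerexact} over $K$ gives $\rank\,J(K)\le\dim_{\mathbb{F}_2}S^2(J/K)-4$. It therefore suffices to show
\[
\dim_{\mathbb{F}_2}S^2(J/K)=4 ,
\]
for this forces $\rank\,J(K)=0$ (and incidentally $\Sh(J/K)[2]=0$), whence both $\rank\,J(\mathbb{Q})$ and $\rank\,J_p(\mathbb{Q})$ vanish. A direct descent on the single curve $C$ shows $\dim_{\mathbb{F}_2}S^2(J/\mathbb{Q})=4$, i.e. $\rank\,J(\mathbb{Q})=0$, which is consistent with and feeds into the computation over $K$.

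To compute $S^2(J/K)$ I would run Schaefer's method on the degree-$5$ model $w^2=u(u-1)(u+1)(u-2)(u+2)$: the relevant \'etale algebra is $K^5$, a Selmer element is a tuple in $(K^*/K^{*2})^5$ of square product, unramified outside the primes of $K$ above $2$, $3$ and $p$, and constrained at each such place by the local image of $J(K_v)/2J(K_v)$. The count $\#J(K_v)/2J(K_v)=|2|_v^{-2}\cdot 16$ from Section~\ref{NotandRes} applies verbatim. The decisive places are those above $2$ (which split in $K$ as $p\equiv 1\bmod 8$): there the local image of the descent map is governed by whether $2$ is a fourth power modulo $p$, that is, exactly by whether $p$ splits completely in $\mathbb{Q}(\sqrt[4]{2})$. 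Under the hypothesis that it does not, this local condition is restrictive enough to cut the global Selmer group down to the $4$-dimensional subspace spanned by the images of $J(K)[2]$, giving the required equality.

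The main obstacle is precisely this local-global matching at $2$. Because $\gal(K/\mathbb{Q})$ has order $2$ while we work over $\mathbb{F}_2$, the group ring $\mathbb{F}_2[\gal(K/\mathbb{Q})]$ is not semisimple, so one cannot simply split $S^2(J/K)$ into eigenspaces under the twisting involution; the contributions of $S^2(J/\mathbb{Q})$ and $S^2(J_p/\mathbb{Q})$ must be separated by hand through the restriction--corestriction sequence. Writing the local image at the primes above $2$ as a biquadratic residue symbol and identifying it with the R\'edei symbol of \cite{stevenhagen2018redei} is what turns the hypothesis on $\mathbb{Q}(\sqrt[4]{2})$ into the vanishing of the extra Selmer classes; carrying out this identification, together with the bookkeeping for the fundamental unit and the class group of the real field $K$, is the technical heart of the argument.
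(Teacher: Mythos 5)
Your skeleton is exactly the paper's own route: view $C_p$ as the quadratic twist of $C$ over $K=\mathbb{Q}(\sqrt{p})$, use the rank relation \eqref{eq:rankrelationtwists} together with $\rank\,J(\mathbb{Q})=0$ (Lemma~\ref{lem5.1}) to reduce the whole theorem to showing $\dim_{\mathbb{F}_2}S^2(J/K)=4$, and then obtain the torsion and $\Sh$ statements from Proposition~\ref{prop2.1} and \eqref{eq:rankshasel}. Two side remarks: your set $S$ need not contain the primes above $p$, since $\Delta(f)=2^{10}\cdot 3^4$ and the ramified prime $(\sqrt{p})$ imposes no condition (harmless, but it inflates $K(S)$ and the bookkeeping); and your restriction--corestriction worry about the non-semisimplicity of $\mathbb{F}_2[\gal(K/\mathbb{Q})]$ is a red herring --- the inequality $\rank\,J(K)\le \dim_{\mathbb{F}_2}S^2(J/K)-4$ is all that is needed, as your own reduction shows, and neither you nor the paper ever decomposes $S^2(J/K)$ under the twisting involution.

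The genuine gap is that the decisive step is asserted, not proved: namely, that the local conditions at the primes above $2$ cut $S^2(J/K)$ down to $\delta(J(K)[2])$ precisely when $p$ is not completely split in $\mathbb{Q}(\sqrt[4]{2})$. Moreover, the mechanism you state is not quite what the computation shows. The individual local data --- the square classes of the fundamental unit $\varepsilon$ and of the generators $x_2,y_2$ at $\mathfrak{p}_2$, $\mathfrak{q}_2$ and at the real places --- are governed by the two R\'edei symbols $[2,-1,p]$ and $[2,2,p]$ \emph{separately}, not by the single quartic-residue condition on $2$ modulo $p$; only after the linear algebra is carried out does one find that $\dim_{\mathbb{F}_2}S^2(J/K)$ depends solely on the product $[2,2,p][2,-1,p]=[2,-2,p]$, which Example~\ref{Ex3.6} identifies with the splitting condition in $\mathbb{Q}(\sqrt[4]{2})$. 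That collapse to the product is a nontrivial outcome rather than an input --- compare the case $p\equiv 1\bmod 24$, where the dimension genuinely depends on three symbols --- and the paper establishes it via Proposition~\ref{prop5.4} (the dimension depends only on the two symbols) combined with a finite verification on the four representative primes $113,\,17,\,41,\,89$, one for each value of the pair of symbols. Until you either carry out those four local-image case analyses or invoke such a reduction-to-finitely-many-primes argument, the core equality $\dim_{\mathbb{F}_2}S^2(J/K)=4$ remains unproven in your write-up.
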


\begin{theorem}\label{Thm1mod24}
Let $p\equiv 1\bmod 24$ be a prime number satisfying one
of the conditions
\begin{itemize}
    \item[$(a)$] $p$ splits completely in $\mathbb{Q}(\sqrt[4]{2})$
    and not in $\mathbb{Q}(\sqrt{1+\sqrt{3}})$;
    \item[$(b)$] $p\equiv 1\bmod 48$ and $p$ splits completely in
    $\mathbb{Q}(\sqrt{1+\sqrt{3}})$ and not in
    $\mathbb{Q}(\sqrt[4]{2})$;
    \item[$(c)$] $p\equiv 25\bmod 48$ and $p$ does not split
    completely in either of $\mathbb{Q}(\sqrt[4]{2})$
    and  $\mathbb{Q}(\sqrt{1+\sqrt{3}})$.
\end{itemize}
The Jacobian
$J_p$ of the curve corresponding to $y^2=x(x^2-p^2)(x^2-4p^2)$
satisfies $J_p(\mathbb{Q})=J(\mathbb{Q})[2]\cong
(\mathbb{Z}/2\mathbb{Z})^4$ and $\Sh(J_p/\mathbb{Q})[2]\cong
(\mathbb{Z}/2\mathbb{Z})^4$.
\end{theorem}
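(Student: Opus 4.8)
The plan is to reduce the whole statement to the single assertion $\rank\,J_p(\mathbb{Q})=0$. By Proposition~\ref{prop2.1} we have $\dim_{\mathbb{F}_2}S^2(J_p/\mathbb{Q})=8$ for $p\equiv 1\bmod 24$, so Equation~\eqref{eq:rankshasel} becomes
\[
\rank\,J_p(\mathbb{Q})+\dim_{\mathbb{F}_2}\Sh(J_p/\mathbb{Q})[2]=4 .
\]
Granting $\rank\,J_p(\mathbb{Q})=0$, this forces $\dim_{\mathbb{F}_2}\Sh(J_p/\mathbb{Q})[2]=4$, hence $\Sh(J_p/\mathbb{Q})[2]\cong(\mathbb{Z}/2\mathbb{Z})^4$, while $J_p(\mathbb{Q})=J_p(\mathbb{Q})[2]\cong(\mathbb{Z}/2\mathbb{Z})^4$ follows from the torsion computation already made in the proof of Corollary~\ref{cor2.2}. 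Thus the entire content of the theorem is the vanishing of the rank, and I expect conditions $(a)$--$(c)$ to be precisely the arithmetic input that forces it.

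To control the rank I would first record that $C_p$ is a quadratic twist of the fixed curve $C=C_1$: the substitution $x=p\xi$, $y=p^2\eta$ turns \eqref{eq:thecurve} into $\eta^2=p\,\xi(\xi^2-1)(\xi^2-4)$, so $J_p$ is the twist $J^{(p)}$ of $J:=\mathrm{Jac}(C)$ by $p$, and $J_p\cong J$ over $\mathbb{Q}(\sqrt p)$. Hence
\[
\rank\,J_p(\mathbb{Q})=\rank\,J\bigl(\mathbb{Q}(\sqrt p)\bigr)-\rank\,J(\mathbb{Q}),
\]
and the problem turns into a non-growth statement for the Mordell--Weil rank of the one fixed Jacobian $J$ along $\mathbb{Q}(\sqrt p)/\mathbb{Q}$. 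Since $C$ carries the order-four automorphism $(\xi,\eta)\mapsto(-\xi,i\eta)$ defined over $\mathbb{Q}(i)$, whose associated coboundary trivialises the quadratic twist by $-1$, the twists $J^{(p)}$ and $J^{(-p)}$ are isomorphic over $\mathbb{Q}$; this is why one is free to descend over either $\mathbb{Q}(\sqrt p)$ or $\mathbb{Q}(\sqrt{-p})$. After pinning down $\rank\,J(\mathbb{Q})$ by an ordinary $2$-descent over $\mathbb{Q}$ on the explicit curve $C$ (a finite computation), it remains to bound $\rank\,J\bigl(\mathbb{Q}(\sqrt{\pm p})\bigr)$ by the same value.

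The improvement over the naive bound $\rank\,J_p(\mathbb{Q})\le 4$ from Proposition~\ref{prop2.1} comes from the extra quadratic involutions $\xi\mapsto 2/\xi$ and $\xi\mapsto-2/\xi$ of $C$, defined over $\mathbb{Q}(\sqrt2)$ and $\mathbb{Q}(\sqrt{-2})$, which realise $J$ up to isogeny over $\mathbb{Q}(\zeta_8)$ as a product of two elliptic curves with complex multiplication by $\mathbb{Z}[i]$ (quartic twists of $y^2=x^3-x$). Performing the $(1+i)$-descent on these CM quotients converts the local solubility conditions into quartic residue conditions at $p$: whether $2$ is a fourth power modulo $p$, equivalently whether $p$ splits completely in $\mathbb{Q}(\sqrt[4]{2})$; and, via the identity $(1+\sqrt3)^2=2(2+\sqrt3)$ with $2+\sqrt3$ the fundamental unit of $\mathbb{Q}(\sqrt3)$, whether $1+\sqrt3$ is a square modulo the primes of $\mathbb{Q}(\sqrt3)$ above $p$, equivalently whether $p$ splits completely in $\mathbb{Q}(\sqrt{1+\sqrt3})$. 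These two conditions are exactly the R\'edei symbols of \cite{stevenhagen2018redei}, and $(a)$--$(c)$ simply list the combinations of their values, refined by $p\bmod 48$, for which every candidate non-torsion Selmer class is obstructed, so that $\rank\,J\bigl(\mathbb{Q}(\sqrt{\pm p})\bigr)=\rank\,J(\mathbb{Q})$ and therefore $\rank\,J_p(\mathbb{Q})=0$.

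The hard part is this last step: carrying out the descent over the quadratic field finely enough and translating each local condition into a R\'edei symbol whose value is then fixed by $(a)$--$(c)$. Two points demand care. First, one must check the bound is sharp---that the four ``extra'' dimensions of $S^2(J_p/\mathbb{Q})$ really all come from $\Sh$ and none from a rational point---which is exactly the outcome of the finer isogeny descent over $\mathbb{Q}(\sqrt{\pm p})$. Second, the bookkeeping of signs and of the prime $2$, which is what splits the congruence $p\equiv 1\bmod 24$ into the classes $1$ and $25$ modulo $48$, has to be matched precisely against the reciprocity of the R\'edei symbol; this is where I expect the genuine work to lie, and where the three-case structure of the statement originates.
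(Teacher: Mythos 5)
Your outer reduction coincides with the paper's: Proposition~\ref{prop2.1} gives $\dim_{\mathbb{F}_2}S^2(J_p/\mathbb{Q})=8$, so by \eqref{eq:rankshasel} the whole theorem follows from $\rank\,J_p(\mathbb{Q})=0$ (torsion being settled in the proof of Corollary~\ref{cor2.2}), and the passage to $\rank\,J(\mathbb{Q}(\sqrt{p}))$ via the twist relation \eqref{eq:rankrelationtwists} and Lemma~\ref{lem5.1} is likewise the paper's route. The gap is that the one step carrying all the content --- that conditions $(a)$--$(c)$ force $\rank\,J(\mathbb{Q}(\sqrt{p}))=0$ --- is not proved: you label it ``the hard part'' and sketch a mechanism that in fact fails. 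The quotients of $C$ by the involutions $\xi\mapsto\pm 2/\xi$ are \emph{not} CM elliptic curves. Over $\mathbb{Q}(\sqrt{2})$, taking the invariants $u=\xi+2/\xi$ and $v=\eta\bigl(1+2\sqrt{2}\,\xi^{-3}\bigr)$ one computes $v^2=(u^2-9)(u-\sqrt{2})^2(u+2\sqrt{2})$, so the quotient is $w^2=(u-3)(u+3)(u+2\sqrt{2})$, whose $j$-invariant is $2744000/9=140^3/9$; this is not an algebraic integer, hence the curve has no CM, and is certainly not a quartic twist of $y^2=x^3-x$, so there is no $(1+i)$-descent to perform. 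Moreover these quotients exist only over $\mathbb{Q}(\sqrt{\pm 2})$, so bounding $\rank\,J(\mathbb{Q}(\sqrt{p}))$ through them would push you into the biquadratic field $\mathbb{Q}(\sqrt{2},\sqrt{p})$, a complication you do not address. (A smaller slip: your worry about ``checking the bound is sharp'' is vacuous --- once the rank vanishes, \eqref{eq:rankshasel} automatically puts all four extra Selmer dimensions into $\Sh$.)

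What the paper actually does here is a genuine $2$-descent on $J$ over $K=\mathbb{Q}(\sqrt{p})$ itself: Proposition~\ref{prop5.5} builds the explicit $S$-unit basis $\langle -1,\varepsilon,x_2,y_2,x_3,y_3\rangle$ of $K(S)$, pins down the table of local square classes of these generators at the six places of $S$ using minimally ramified dihedral extensions and R\'edei reciprocity, and concludes that $\dim_{\mathbb{F}_2}S^2(J/K)$ depends only on the four symbols $[2,2,p]$, $[2,-1,p]$, $[3,-2,p]$, $[3,6,p]$; a finite Magma computation for one representative prime per symbol pattern then produces the dimension table, and the proof of Theorem~\ref{Thm1mod24} is the dictionary: condition $(a)$ corresponds to $[2,-2,p]=[2,2,p][2,-1,p]=1$ with $[3,-2,p]=-1$, while $(b)$ and $(c)$ use $[3,-2,p]$ together with $[2,2,p]=1\Leftrightarrow p\equiv 1\bmod 16$, which is exactly what the refinement of $p\equiv 1\bmod 24$ into $1$ and $25\bmod 48$ encodes; in each resulting row the table gives $\dim_{\mathbb{F}_2}S^2(J/K)=4$. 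None of this --- neither the symbol-determinacy of the local tables nor the case check against $(a)$--$(c)$ --- appears in your proposal, and since the CM claim underlying your sketch is false, the sketch provides no substitute for it.
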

Using Chebotar\"{e}v's density theorem (see, e.g., \cite{LS}),
one observes that the set of prime numbers satisfying
the condition given in Theorem~\ref{Thm23mod24} has
a positive Dirichlet density. The same holds for the set of
primes satisfying the condition in Theorem~\ref{Thm17mod24}
and for each of the three sets corresponding to
Theorem~\ref{Thm1mod24}~(a), \ref{Thm1mod24}~(b), 
and \ref{Thm1mod24}~(c).
\section{R\'edei Symbols}

In this section we recall the definition
and various properties of the R\'edei symbol. It is a tri-linear symbol taking values in $\mu_2$ and it satisfies a reciprocity law based on the product formula for quadratic Hilbert symbols. This reciprocity allows us to link the splitting behaviour of certain primes in dihedral extensions over $\mathbb{Q}$ of degree $8$ in a non-trivial way, which functions as a useful supplement to various $2$-Selmer group computations. The reciprocity of the R\'edei-symbol is a recent result due to P. Stevenhagen in \cite{stevenhagen2018redei}; his text is the basis for the exposition in this section.\\

Let $a,b$ be square-free integers representing non-trivial elements in $\mathbb{Q}^{*}/\mathbb{Q}^{*2}$, 
and suppose their local quadratic Hilbert symbols are all trivial:
\begin{equation}\label{eq:quadhilbert}
(a,b)_p=1,\quad\text{for all primes}\; p.
\end{equation}
By the local-global principle of Hasse and Minkowski, condition \eqref{eq:quadhilbert} is equivalent to the existence of a non-zero rational solution $(x,y,z)$ to the equation
\begin{equation}\label{eq:hassmink}
x^2-ay^2-bz^2=0.
\end{equation}
Take such a solution and put 
\begin{equation}
\alpha=2(x+z\sqrt{b}),\quad \beta=x+y\sqrt{a}.
\end{equation}
Then $F:=E(\sqrt{\alpha})=E(\sqrt{\beta})$ defines a
quadratic extension of $E=\mathbb{Q}(\sqrt{a},\sqrt{b})$ that is normal over $\mathbb{Q}$, cyclic of degree $4$ over $\mathbb{Q}(\sqrt{ab})$, and dihedral of degree $8$ over $\mathbb{Q}$ when $\mathbb{Q}(\sqrt{ab})\neq\mathbb{Q}$, see \cite[Lemma~5.1, Corollary~5.2]{stevenhagen2018redei}. The extension $F$ can be twisted to $F_t$ for $t\in\mathbb{Q}^{*}$ by scaling the solution $(x,y,z)$ to $(tx,ty,tz)$. 
By \cite[Propositions~7.2,7.3]{stevenhagen2018redei} 
choosing $t$ appropriately ensures that $F_t/E$ is unramified at all finite primes of odd residue
characteristic, but in some cases ramification over $2$ cannot be avoided. With  $\Delta(d)=\Delta(\mathcal{O}_{\mathbb{Q}(\sqrt{d})})$ for $d\in\mathbb{Q}^{*}/\mathbb{Q}^{*2}$ denoting the discriminant, one makes the following definition.
\begin{definition}\label{minram}
Let $K=\mathbb{Q}(\sqrt{ab})$ for non-trivial $a,b\in\mathbb{Q}^{*}/\mathbb{Q}^{*2}$, and let $F$ be the quadratic extension of $E=\mathbb{Q}(\sqrt{a},\sqrt{b})$ corresponding with a solution of \eqref{eq:hassmink}. The extension $F/K$ is minimally ramified if the following conditions hold:
\begin{itemize}
    \item[$(a)$] The extension $F/K$ is unramified over all odd primes $p\nodiv\mathrm{gcd}(\Delta(a),\Delta(b))$.
    \item[$(b)$] The extension $F/K$ is unramified over $2$ if $\Delta(a)\Delta(b)$ is odd, or if one of $\Delta(a),\Delta(b)$ is $1\bmod 8$.
    \item[$(c)$] If $\{\Delta(a),\Delta(b)\}\equiv \{4,5\}\bmod 8$, take $s\in\{a,b\}$ such that $\Delta(s)\equiv 4\bmod 8$. The local biquadratic extension $\mathbb{Q}_2(\sqrt{s})\subset F\otimes\mathbb{Q}_2$ must have conductor $2$. 
\end{itemize}
\end{definition}
By \cite[Lemma~7.7]{stevenhagen2018redei}  it is possible to twist a given $F$ to a suitable $F_t$ which is minimally ramified over $\mathbb{Q}(\sqrt{ab})$. 
For convenience, a degree $8$ dihedral extension of $\mathbb{Q}$ is called minimally ramified if it is so over its subfield defined by the order $4$ cyclic subgroup of the Galois group. Observe that the definition imposes no restrictions over the prime $2$ in case $2$ is totally ramified in $\mathbb{Q}(\sqrt{a},\sqrt{b})$. 

\begin{definition}
For non-trivial $a,b,c\in\mathbb{Q}^{*}/\mathbb{Q}^{*2}$ with local quadratic Hilbert symbols
\begin{equation}\label{eq:quadhilbs}
(a,b)_p=(a,c)_p=(b,c)_p=1
\end{equation}
for all primes $p$ and moreover
\begin{equation}\label{eq:coprimedisc}
\mathrm{gcd}(\Delta(a),\Delta(b),\Delta(c))=1,
\end{equation}
set $K=\mathbb{Q}(\sqrt{ab})$ and $E=\mathbb{Q}(\sqrt{a},\sqrt{b})$, and take a
corresponding $F/K$ which is minimally ramified. 
Define $[a,b,c]\in\gal(F/E)=\mu_2$ by
\[
[a,b,c]=\begin{cases}
\mathrm{Art}(\mathfrak{c},F/K) & \text{ if } c> 0 \\
\mathrm{Art}(\mathfrak{c}\infty,F/K) & \text{ if } c<0 \\
\end{cases}
\]
where $\mathrm{Art}(\cdot , \cdot)$ is the Artin symbol and $\mathfrak{c}\in\mathcal{I}(\mathcal{O}_K)$ has norm $|c_0|$ with $c_0$ the square-free integer representing
$c$, and $\infty$ denotes an infinite prime of $K$.\\
If at least one of $a,b$ and $c$ is trivial then one sets $[a,b,c]=1$.
\end{definition}
\begin{proposition}
For $a,b,c\in\mathbb{Q}^{*}/\mathbb{Q}^{*2}$ satisfying \eqref{eq:quadhilbs} and \eqref{eq:coprimedisc}, the Rédei symbol $[a,b,c]\in\mu_2$ is well-defined. Moreover, the symbol is tri-linear, and perfectly symmetrical in all three arguments.
\end{proposition}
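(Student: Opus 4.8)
The plan is to establish the three claims in turn, treating well-definedness first and deferring symmetry, which is the genuine content, to a local reinterpretation of the symbol. For well-definedness I would first check that $\mathrm{Art}(\mathfrak{c},F/K)$ (respectively $\mathrm{Art}(\mathfrak{c}\infty,F/K)$) really lands in the order-two subgroup $\gal(F/E)=\mu_2$ of the cyclic group $\gal(F/K)$ of order $4$. Restricting the Artin symbol to the quadratic subextension $E=K(\sqrt{a})/K$ yields, for each prime $\mathfrak{q}\mid\mathfrak{c}$, the norm-residue symbol recording whether $a$ is locally a square at $\mathfrak{q}$; collecting these over all $\mathfrak{q}\mid c_0$ and comparing with the product formula for the $\mathbb{Q}$-Hilbert symbols $(a,c)_v$ (the infinite place being accounted for by the $\infty$ inserted when $c<0$), the hypothesis $(a,c)_v=1$ for all $v$ forces this restriction to be trivial. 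Hence the symbol indeed takes values in $\gal(F/E)$.

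Next I would show independence of the three choices made in the definition: the rational solution $(x,y,z)$ of \eqref{eq:hassmink}, the twist $t$ rendering $F/K$ minimally ramified, and the ideal $\mathfrak{c}$ of norm $|c_0|$. The coprimality hypothesis \eqref{eq:coprimedisc} guarantees that every rational prime $q\mid c_0$ is unramified in $F/K$, so the Artin symbol is defined on $\mathfrak{c}$; and since $F/\mathbb{Q}$ is normal with dihedral Galois group, the two primes of $K$ above such a $q$ are interchanged by the nontrivial element of $\gal(K/\mathbb{Q})$, whose conjugation acts on $\gal(F/K)$ by inversion. As the value already lies in the $2$-torsion subgroup $\gal(F/E)$, inversion fixes it, so the two candidate Frobenii agree and $\mathrm{Art}(\mathfrak{c},F/K)$ depends only on $|c_0|$. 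For the twist I would invoke Stevenhagen's existence result \cite[Lemma~7.7]{stevenhagen2018redei} together with the normalization of Definition~\ref{minram}: two admissible models differ by a twist whose ramification is supported where $\mathfrak{c}$ is coprime, so their Artin symbols on $\mathfrak{c}$ coincide.

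The heart of the proposition is symmetry, which is invisible in the definition because $c$ enters through an Artin symbol in $F/K$ while $a$ and $b$ build $F$. Here I would follow \cite{stevenhagen2018redei} and rewrite $\mathrm{Art}(\mathfrak{c},F/K)$, via Artin reciprocity on $K$, as a finite product of local norm-residue symbols evaluated on the explicit generators $\alpha=2(x+z\sqrt{b})$ and $\beta=x+y\sqrt{a}$ of $F/E$. Minimal ramification is exactly what makes this product manageable: the local factors at odd primes outside $\gcd(\Delta(a),\Delta(b))$ drop out by condition $(a)$ of Definition~\ref{minram}, while conditions $(b)$ and $(c)$ control the contribution at $2$ and the $\infty$-bookkeeping controls the real place. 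The resulting expression can be arranged to be symmetric under permuting $a,b,c$; since the transpositions generate $S_3$, it suffices to verify invariance under two of them, which realizes Rédei reciprocity $[a,b,c]=[b,a,c]=[a,c,b]$.

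Finally, tri-linearity follows from the symmetric local formula: the norm-residue symbols are bi-multiplicative, so $[a,b,c]$ is multiplicative in the third argument provided the admissibility conditions \eqref{eq:quadhilbs} and \eqref{eq:coprimedisc} persist when $c$ is replaced by a product, and the convention $[a,b,c]=1$ for trivial arguments is consistent with this; symmetry then propagates multiplicativity to all three slots. I expect the main obstacle to be the third step: converting the single asymmetric Artin symbol into a manifestly symmetric product of local symbols, and in particular pinning down the factor at the prime $2$. This is precisely where Definition~\ref{minram} is indispensable and where the reciprocity of \cite{stevenhagen2018redei} does its real work.
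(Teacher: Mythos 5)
Your overall architecture coincides with the paper's sketch: show the Artin symbol lands in $\gal(F/E)$, deduce independence of the choice of $\mathfrak{c}$ from the fact that $\gal(F/E)$ is the $2$-torsion of the cyclic group $\gal(F/K)$ (your inversion argument is literally equivalent to the paper's observation that $\gal(F/E)$ is central in the dihedral group $\gal(F/\mathbb{Q})$), reduce tri-linearity to linearity in $c$ plus symmetry, and delegate the reciprocity $[a,b,c]=[a,c,b]$ to \cite[Section~8]{stevenhagen2018redei}. One local caveat: your product-formula phrasing only controls the \emph{product} of the restrictions to $E$ of the Frobenii at the primes dividing $\mathfrak{c}$, whereas your own inversion step needs each \emph{individual} Frobenius to be $2$-torsion. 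So argue prime by prime, and with both $a$ and $b$: for $q\mid c_0$ the conditions $(a,c)_q=(b,c)_q=1$ make $q$ split or ramified in both $\mathbb{Q}(\sqrt{a})$ and $\mathbb{Q}(\sqrt{b})$, condition \eqref{eq:coprimedisc} rules out ramification in both, so every prime of $K$ above $q$ has norm $q$ (this also gives the existence of $\mathfrak{c}$), splits in $E$, and is unramified in $F$ by conditions $(a)$ and $(b)$ of Definition~\ref{minram}.

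The genuine gap is your treatment of the independence of the minimally ramified model $F$. Your claim that two admissible models ``differ by a twist whose ramification is supported where $\mathfrak{c}$ is coprime, so their Artin symbols on $\mathfrak{c}$ coincide'' fails twice over. First, unramifiedness of the difference character at the primes in the support of $\mathfrak{c}$ only makes its value on $\mathfrak{c}$ \emph{defined}, not trivial: an everywhere-unramified-at-$\mathfrak{c}$ quadratic character of $K$ can perfectly well evaluate to $-1$ on $\mathfrak{c}$, so this is not a ramification-support statement at all. Second, the difference character can be ramified at $2$ and nontrivial at the real place, which enters the modulus whenever $c<0$; the paper's own nonexample following Example~\ref{redeiexample1} exhibits exactly this: for $a=b=2$ the two minimally ramified fields $F$ (real) and $F'$ (complex) give different values for the would-be symbol $[2,2,-1]$. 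Note that $(2,2)_v=(2,-1)_v=1$ for all $v$ there, and only \eqref{eq:coprimedisc} fails --- yet your locality argument never uses \eqref{eq:coprimedisc} at this step, so it would ``prove'' well-definedness of $[2,2,-1]$ as well, a contradiction. Under the actual hypotheses the independence is true, but it is not formal: in \cite{stevenhagen2018redei} it is Corollary~8.2 and is extracted from the same Hilbert-symbol product-formula computation that yields the reciprocity law, which is precisely why the paper cites it rather than arguing locally.
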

\begin{proof} {\it (Sketch.)}
If $p|c$, then $(c,b)_p=(c,a)_p=1$ implies that $p$ is either split or ramified in both $\mathbb{Q}(\sqrt{a})$ and $\mathbb{Q}(\sqrt{b})$. 
Condition \eqref{eq:coprimedisc} implies that $p$ cannot ramify in both, hence a prime $\mathfrak{p}_K|p$ in $K$ has norm $p$ and splits in $E$. 
The prime $\mathfrak{p}_K$ is unramified in $F$ by the minimal ramification of $F$, where the parity of $p$ determines whether this is due to condition $(a)$ or $(b)$. It follows that indeed $\mathrm{Art}(\mathfrak{p}_K,F/K)\in\gal(F/E)$, and as $\gal(F/E)$ is in the center of $\gal(F/\mathbb{Q})$ this Artin symbol is independent of $\mathfrak{p}_K$. When $c<0$ and $K$ is real, the Artin symbol in $F$ of any infinite prime of $K$ measures whether $F$ is real or complex and hence is independent of the choice of infinite prime of $K$. As $[a,b,c]$ is the product of such Artin symbols we see that $[a,b,c]$ does not depend on the choice of $\mathfrak{c}$ or $\infty$. For the independence of $F$ we refer to \cite[Corollary~8.2]{stevenhagen2018redei}.\\

The set of triples $(a,b,c)$ in $\mathbb{Q}^{*}/\mathbb{Q}^{*2}$ for which \eqref{eq:quadhilbs} and \eqref{eq:coprimedisc} hold is `tri-linearly closed', and the R\'edei symbol $[a,b,c]$ is clearly linear in $c$, hence tri-linearity follows from the symmetry. The symmetry in the first two arguments is immediate, while the identity
\[
[a,b,c]=[a,c,b]
\]
is a non-trivial reciprocity depending on the product formula for quadratic Hilbert symbols in $\mathbb{Q}(\sqrt{a})$. The proof of this reciprocity is the subject of \cite[Section~8]{stevenhagen2018redei}.
\end{proof}
\begin{example}\label{redeiexample1}
Consider the case when $a=b=2$. 
Then the invariant fields $F$ and $F'$ of the subgroups generated by $-1\bmod 16$ and $7\bmod 16$ inside $\gal(\mathbb{Q}(\zeta_{16})/\mathbb{Q})$, respectively, 
are two minimally ramified extensions of $\mathbb{Q}$ which can be used to compute a R\'edei symbol of the form $[2,2,c]$ provided that the symbol is defined, i.e. when $\Delta(c)$ is odd and $(2,c)_2=1$, i.e. when $c\equiv 1\bmod 8$. Taking for example $c=-p$ for a prime $p\equiv -1\bmod 8$, then as $F$ is totally real and $p$ splits completely in $F$ precisely when $p\equiv \pm 1\bmod 16$ we obtain
\[
[2,2,-p]=\begin{cases}
1 & \text{ if } p\equiv -1\bmod 16 \\
-1 & \text{ if } p\equiv 7\bmod 16 \\
\end{cases}
\]
Note that we get the same conclusion when using the (complex!) field $F'$ as $p$ splits completely in $F'$ precisely when $p\equiv 1,7\bmod 16$.
\end{example}

\begin{nonexample}
Continuing the setup of Example \ref{redeiexample1}, we see that $`[2,2,-1]'$ is not defined as $\Delta(-1)$ is even (although $(2,-1)_2=1$). We can nonetheless consider an Artin symbol `that should define $[2,2,-1]$', but as $F$ is real and $F'$ complex, such a symbol is \textit{not} independent of the minimally ramified extension.
\end{nonexample}

\begin{example}\label{Ex3.6}
Let $p\equiv 1\bmod 8$ be a prime. Let $\pi\in\mathbb{Z}[\sqrt{2}]$ be an element of norm $p$ with conjugate $\pi'$. Then $[2,p,p]=1$ precisely when $\pi$ is a square mod $\pi'$. Since $[2,p,p]=[p,p,2]$, this is equivalent to $2$ being completely split in the quartic subfield $E$ of $\mathbb{Q}(\zeta_p)$. As $E$ corresponds with the subgroup of fourth powers in $\gal(\mathbb{Q}(\zeta_p)/\mathbb{Q})=(\mathbb{Z}/p\mathbb{Z})^{*}$ and $2\bmod p=\mathrm{Frob}_p\in \gal(\mathbb{Q}(\zeta_p)/\mathbb{Q})$, we see that $2$ splits completely in $E$ precisely when $2\bmod p$ is a fourth power, i.e. when $p$ splits completely in $\mathbb{Q}(\sqrt[4]{2})$, i.e. when $[2,-2,p]=1$. We thus have the identity
\[
[2,p,p]=[2,-2,p].
\]
\end{example}
With this we obtain a generalisation of \cite[Prop. ~4.1]{StrTo1994}, where it is used to prove that $ (\mathbb{Z}/2\mathbb{Z})^2\subset\Sh(E/\mathbb{Q})[2]$ for the elliptic curve $E$ defined by $y^2=(x+p)(x^2+p^2)$ for a prime $p\equiv 9\bmod 16$ such that $1+\sqrt{-1}\in \mathbb{F}_p$ is a square.
\begin{corollary}
Let $p\equiv 1\bmod 8$ be a prime, let $\pi\in\mathbb{Z}[\sqrt{2}]$ have norm $p$ with conjugate $\pi'$ and let $i\in\mathbb{F}_p$ be a primitive fourth root of unity. Consider the following statements.
\begin{enumerate}[(a)]
    \item $\pi$ is a square mod $\pi'$.
    \item $1+i$ is a square mod $p$.
\end{enumerate}
Then the statements are equivalent when $p\equiv 1\bmod 16$, while for $p\equiv 9\bmod 16$ exactly one of the statements holds.
\end{corollary}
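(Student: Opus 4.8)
\emph{(Proof sketch.)}
The plan is to reduce each of the two statements to an explicit power-residue condition modulo $p$ and then to compare the two conditions by keeping careful track of a single fourth root of unity, whose value depends only on $p\bmod 16$.

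First I would dispose of statement $(a)$ using Example~\ref{Ex3.6}. There it is shown that $\pi$ is a square mod $\pi'$ precisely when $[2,p,p]=1$, and that $[2,p,p]=[2,-2,p]=1$ holds exactly when $2$ is a fourth power modulo $p$. Hence statement $(a)$ is equivalent to $2^{(p-1)/4}\equiv 1\bmod p$. This reformulation is meaningful because $p\equiv 1\bmod 8$ forces $2^{(p-1)/2}\equiv 1$, so $2^{(p-1)/4}\equiv\pm 1$, equalling $1$ exactly when $2$ is a fourth power; it also removes any reference to the choice of $\pi$, settling the well-definedness of $(a)$.

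Next I would rewrite statement $(b)$ by Euler's criterion. Since $(1+i)^2=1+2i+i^2=2i$ and $8\mid p-1$, one computes
\[
(1+i)^{(p-1)/2}=\bigl((1+i)^2\bigr)^{(p-1)/4}=(2i)^{(p-1)/4}=2^{(p-1)/4}\,i^{(p-1)/4}.
\]
Thus statement $(b)$, i.e.\ $(1+i)^{(p-1)/2}=1$, is equivalent to $2^{(p-1)/4}=i^{(p-1)/4}$, both sides being $\pm 1$ as $(p-1)/4$ is even. The condition $(b)$ does not depend on the choice of primitive fourth root, since replacing $i$ by $-i$ replaces $1+i$ by $1-i$ and $(1+i)(1-i)=2$ is a square for $p\equiv 1\bmod 8$.

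Finally I would evaluate $i^{(p-1)/4}=(-1)^{(p-1)/8}$, which is $1$ when $p\equiv 1\bmod 16$ and $-1$ when $p\equiv 9\bmod 16$. Comparing with the reformulation of $(a)$: for $p\equiv 1\bmod 16$ condition $(b)$ becomes $2^{(p-1)/4}=1$, which is exactly $(a)$, so the two statements are equivalent; for $p\equiv 9\bmod 16$ condition $(b)$ becomes $2^{(p-1)/4}=-1$, the negation of $(a)$, so precisely one of $(a),(b)$ holds. The only genuinely delicate point is the correct bookkeeping of the fourth root $i^{(p-1)/4}$ and the verification that $(p-1)/4$ is even; everything else is a direct application of Example~\ref{Ex3.6} together with Euler's criterion.
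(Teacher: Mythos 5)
Your proof is correct, but it takes a genuinely different route from the paper for statement (b) and for the final comparison. The paper stays entirely inside the R\'edei-symbol calculus: it encodes (a) as $[2,-2,p]=1$ (via Example~\ref{Ex3.6}, just as you do), encodes (b) as $[2,-1,p]=1$ (implicitly identifying the minimally ramified extension attached to the pair $(2,-1)$, whose splitting condition at $p$ is exactly that $1+i$ is a square mod $p$), and then concludes in one line from trilinearity, $[2,-2,p]\cdot[2,-1,p]=[2,2,p]$, together with the evaluation $[2,2,p]=1\Leftrightarrow p\equiv\pm 1\bmod 16$ coming from the subfields of $\mathbb{Q}(\zeta_{16})$ as in Example~\ref{redeiexample1}. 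You instead leave the symbol formalism after a single use of Example~\ref{Ex3.6}, converting (a) into $2^{(p-1)/4}\equiv 1\bmod p$, and handle (b) by the bare-hands Euler-criterion computation $(1+i)^{(p-1)/2}=(2i)^{(p-1)/4}=2^{(p-1)/4}\,i^{(p-1)/4}$ with $i^{(p-1)/4}=(-1)^{(p-1)/8}$ carrying the dependence on $p\bmod 16$; your bookkeeping is accurate ($8\mid p-1$ makes $(p-1)/4$ even so both sides are $\pm 1$, and the choice of $i$ is immaterial since $(1+i)(1-i)=2$ is a square mod $p$). In effect your finite-field calculation re-proves, in this special case, both the trilinearity identity $[2,-2,p]\cdot[2,-1,p]=[2,2,p]$ and the value of $[2,2,p]$, so the only non-elementary input you retain is the reciprocity $[2,p,p]=[2,-2,p]$ of Example~\ref{Ex3.6}, which is where the real depth of the corollary sits. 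The paper's version is shorter and showcases the symbol machinery the rest of the section depends on; yours is more self-contained and makes explicit some elementary points the paper leaves implicit (well-definedness of both statements, the parity check on $(p-1)/4$), at the cost of a somewhat longer argument.
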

\begin{proof}
Statement $(a)$ holds when $[2,p,p]=[2,-2,p]=1$, while statement $(b)$ holds when $[2,-1,p]=1$. The result follows because
\[
[2,-2,p]\cdot[2,-1,p]=[2,2,p]=
\begin{cases}
1 & \text{ if }p\equiv 1\bmod 16,\\
-1 & \text{ if }p\equiv 9\bmod 16.
\end{cases}\qedhere
\]
\end{proof}

\section{Computation of $2$-Selmer groups}
We start by recalling the explicit form of $2$-descent that will be used. Let $K$ be a number field and $C$ the hyperelliptic curve defined by $y^2=f(x)$, for $f\in K[x]$ square-free and of odd degree $2g+1$. We have the short exact sequence
\[
0\to J(K)/2J(K)\to S^2(J/K)\to \Sh(J/K)[2]\to 0,
\]
where $S^2(J/K)$ and $\Sh(J/K)$ are respectively the $2$-Selmer group and the Shafarevich-Tate group defined in terms of Galois cohomology by
\begin{align*}
S^2(J/K)&:=\ker\left(H^1(G_K,J(\overline{K})[2])\to \prod\nolimits_{\mathfrak{p}}H^1(G_{K_{\mathfrak{p}}},J(\overline{K_{\mathfrak{p}}}))\right),\\
\Sh(J/K)&:=\ker\left(H^1(G_K,J(\overline{K}))\to \prod\nolimits_{\mathfrak{p}}H^1(G_{K_{\mathfrak{p}}},J(\overline{K_{\mathfrak{p}}}))\right).
\end{align*}
By \cite[Theorems~2.1 \& 2.2]{schaefer1995} one has
$H^1(G_K,J(\overline{K})[2])\cong \ker(A^{*}/A^{*2}\xrightarrow{N} K^{*}/K^{*2})$, 
where $A = K[x]/(f(x))$ and $N$ is induced by the norm map $A\to K$.
This identifies $S^2(J/K)$ with the elements in $ \ker(A^{*}/A^{*2}\xrightarrow{N} K^{*}/K^{*2})$ that are mapped, according to the commutative diagram
\[
\begin{tikzcd}
J(K)/2J(K) \arrow[d] \arrow[r, "\delta", hook]                                    & A^{*}/A^{*2} \arrow[d]                     \\
J(K_{\mathfrak{p}})/2J(K_{\mathfrak{p}}) \arrow[r, "\delta_{\mathfrak{p}}", hook] & A_{\mathfrak{p}}^{*}/A_{\mathfrak{p}}^{*2},
\end{tikzcd}
\]
into $\im(\delta_{\mathfrak{p}})$ for all primes $\mathfrak{p}$ of $K$.

We consider the special case that $f\in\mathcal{O}_K[x]$ is monic and completely splits, so $f=\prod_{i=1}^{2g+1}(x-\alpha_i)$ for distinct $\alpha_j\in \mathcal{O}_K$. In this case $A\xrightarrow{\sim} \bigoplus_{i=1}^{2g+1}K$ determined by $x\mapsto (\alpha_1,\dotsc,\alpha_{2g+1})$, and
the norm map $A\to K$ corresponds to multiplication
$\oplus_{i=1}^{2g+1} K\to K$. Hence the kernel of the norm $\oplus_{i=1}^{2g+1}K^*/{K^*}^2\stackrel{N}{\longrightarrow} K^*/{K^*}^2$ consists of the `hyperplane' of those $(2g+1)$-tuples for which the product of all coordinates is trivial. 

Let $S$ consist of the real primes of $K$ together with the finite primes dividing $2\Delta(f)$, and put $K(S):=\{x\in K^{*}/K^{*2}:\ord_{\mathfrak{p}}(x)\equiv 0\bmod 2\text{ for all finite }\mathfrak{p}\notin S\}$.
One has (compare \cite[pp.~226-227]{schaefer1995})
\begin{equation}\label{eq:finselmalg}
S^2(J/K)\subset\ker\bigg(\bigoplus_{i=1}^{2g+1}K(S)\to K(S)\bigg),
\end{equation}
and $S^2(J/K)$ consists of those elements in the kernel of \eqref{eq:finselmalg} that map into $\im(\delta_{\mathfrak{p}})$ for each $\mathfrak{p}\in S$ in the following diagram.
\vspace{-0.2cm}
\[
\begin{tikzcd}
J(K)/2J(K)\ar{r}{\delta}\ar{d} & \displaystyle\bigoplus\limits_{i=1}^{2g+1}K^{*}/K^{*2}\ar{d}\\
J(K_{\mathfrak{p}})/2J(K_{\mathfrak{p}})\ar{r}{\delta_{\mathfrak{p}}} & \displaystyle\bigoplus_{i=1}^{2g+1}K_{\mathfrak{p}}^{*}/K_{\mathfrak{p}}^{*2}.
\end{tikzcd}
\vspace{-0.1cm}
\]
Here the injective homomorphism $\delta$ and similarly $\delta_{\mathfrak{p}}$
is given by
\begin{equation}\label{eq:imagedivisors}
\sum_{i=1}^r[P_i]-r[\infty]\mapsto\prod_{i=1}^r(x(P_i)-\alpha_1,\dotsc, x(P_i)-\alpha_{2g+1}),
\end{equation}
for $P_1,\dotsc,P_r\in C(\overline{K})$ forming a $G_K$-orbit not containing a Weierstrass point. The $j$-th coordinate of the $\delta$-image of $[(\alpha_i,0)]-[\infty]$ for $i\neq j$ is $\alpha_i-\alpha_j$. The $i$-th coordinate is then determined by the hyperplane condition: it equals $\prod_{j\neq i}(\alpha_i-\alpha_j)$. 
As already remarked in Section~\ref{NotandRes} the cardinality
of $J(K_{\mathfrak{p}})/2J(K_{\mathfrak{p}})$ and hence that of
$\im(\delta_{\mathfrak{p}})$ is known. In practise this makes it
fairly straightforward to describe explicit representants of the
elements in $\im(\delta_{\mathfrak{p}})$, for each $\mathfrak{p}\in S$.


The group $K(S)$ fits in the exact sequence
\[
\begin{tikzcd}
0 \arrow[r] & R_S^{*}/R_S^{*2} \arrow[r] & K(S) \arrow[r,"\beta"] & \cl(R_S)[2] \arrow[r] & 0
\end{tikzcd}
\]
where $R_S=\{0\}\cup\{x\in K^{*}:\mathrm{ord}_{\mathfrak{p}}(x)\geq 0\text{ for all finite }\mathfrak{p}\notin S\}$ is the ring of $S$-integers in $K$. 
Here $\beta$ sends $xK^{*2}$ to the class $[I R_S]$, where $x\mathcal{O}_K=\mathfrak{a}I^2$ with $\mathfrak{a}$ and $I$ co-prime fractional ideals such that $\mathfrak{a}$ is supported on prime ideals of $S$ and the support of $I$ does not contain any prime of $S$.
This is well-known; for completeness see \cite[Prop.~2.4.4]{evink}. The case of interest to us is when $K$ has odd class number. 
\begin{proposition}\label{sunitbasis}
If $K$ has odd class number then the map $R_S^{*}/R_S^{*2}\to K(S)$ is an isomorphism. Moreover, 
for each finite $\mathfrak{p}\in S$ writing $\mathfrak{p}^{k_{\mathfrak{p}}}=(x_{\mathfrak{p}})$ with $k_{\mathfrak{p}}$ the order of $\mathfrak{p}$ in the class group of $K$, the $x_{\mathfrak{p}}$ together with an $\mathbb{F}_2$-basis for $\mathcal{O}_K^{*}/\mathcal{O}_K^{*2}$ form an $\mathbb{F}_2$-basis for $K(S)$.
\end{proposition}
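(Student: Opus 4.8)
The plan is to exploit the exact sequence
\[
0 \to R_S^{*}/R_S^{*2} \to K(S) \xrightarrow{\beta} \cl(R_S)[2] \to 0
\]
recorded just above the statement, and then to analyse the $S$-units modulo squares via their valuations. For the first assertion I would observe that the $S$-class group $\cl(R_S)$ is a quotient of the full class group $\cl(\mathcal{O}_K)$ (one divides out by the subgroup generated by the classes of the finite primes in $S$). Since $\cl(\mathcal{O}_K)$ has odd order by hypothesis, so does its quotient $\cl(R_S)$, whence $\cl(R_S)[2]=0$. The exact sequence then collapses to the isomorphism $R_S^{*}/R_S^{*2}\xrightarrow{\sim} K(S)$.

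For the basis statement I would work with the reduction of the valuation maps modulo $2$, that is with the homomorphism
\[
\bar v\colon R_S^{*}/R_S^{*2}\longrightarrow \bigoplus_{\mathfrak{p}\in S,\ \mathfrak{p}\text{ finite}}\mathbb{Z}/2\mathbb{Z},\qquad u\longmapsto (\ord_{\mathfrak{p}}(u)\bmod 2)_{\mathfrak{p}},
\]
which is well defined because every element of $R_S^{*}$ has valuation $0$ outside $S$ and squares have even valuation everywhere. The goal is a short exact sequence
\[
0\to \mathcal{O}_K^{*}/\mathcal{O}_K^{*2}\to R_S^{*}/R_S^{*2}\xrightarrow{\ \bar v\ }\bigoplus_{\mathfrak{p}\in S,\ \mathfrak{p}\text{ finite}}\mathbb{Z}/2\mathbb{Z}\to 0.
\]
Surjectivity is the easy half: for each finite $\mathfrak{p}\in S$ one has $\ord_{\mathfrak{p}}(x_{\mathfrak{p}})=k_{\mathfrak{p}}$ and $\ord_{\mathfrak{q}}(x_{\mathfrak{p}})=0$ for $\mathfrak{q}\neq\mathfrak{p}$, and since $k_{\mathfrak{p}}$ divides the odd integer $\#\cl(\mathcal{O}_K)$ it is odd, so $\bar v(x_{\mathfrak{p}})$ is the standard basis vector at $\mathfrak{p}$. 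Thus the classes of the $x_{\mathfrak{p}}$ already map onto a basis of the target.

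The crux is the kernel computation, and this is where the odd class number is used a second time. If $u\in R_S^{*}$ has even valuation at every prime of $S$, then $u\mathcal{O}_K=\mathfrak{b}^2$ with $\mathfrak{b}$ supported on the finite primes of $S$; the relation $[\mathfrak{b}]^2=1$ in $\cl(\mathcal{O}_K)$ together with the odd order of the class group forces $[\mathfrak{b}]=1$, so $\mathfrak{b}=(w)$ is principal and $u=\varepsilon w^2$ for some $\varepsilon\in\mathcal{O}_K^{*}$. Hence $u\equiv\varepsilon$ in $R_S^{*}/R_S^{*2}$, so $\ker\bar v$ is contained in the image of $\mathcal{O}_K^{*}$; the reverse inclusion is clear. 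A short check gives injectivity of $\mathcal{O}_K^{*}/\mathcal{O}_K^{*2}\to R_S^{*}/R_S^{*2}$ (a unit that becomes an $S$-unit square has vanishing valuations, hence is already a square in $\mathcal{O}_K^{*}$). With the exact sequence in hand, the standard fact that lifts of a basis of the quotient together with a basis of the sub form a basis of the middle term yields exactly the claimed $\mathbb{F}_2$-basis of $R_S^{*}/R_S^{*2}\cong K(S)$, namely the $x_{\mathfrak{p}}$ together with any $\mathbb{F}_2$-basis of $\mathcal{O}_K^{*}/\mathcal{O}_K^{*2}$.

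I expect the only genuine obstacle to be the kernel computation above, specifically the passage from $[\mathfrak{b}]^2=1$ to $\mathfrak{b}$ principal, which is precisely the point where oddness of the class number is indispensable; everything else is bookkeeping with valuations and the structure of the $S$-unit group.
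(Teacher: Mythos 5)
Your proof is correct and complete: the collapse of the exact sequence via $\cl(R_S)[2]=0$, the valuation-mod-$2$ sequence $0\to \mathcal{O}_K^{*}/\mathcal{O}_K^{*2}\to R_S^{*}/R_S^{*2}\to \bigoplus_{\mathfrak{p}}\mathbb{Z}/2\mathbb{Z}\to 0$, and the two uses of odd class number (oddness of each $k_{\mathfrak{p}}$ for surjectivity, triviality of $2$-torsion to make $\mathfrak{b}$ principal in the kernel step) are exactly the standard argument. The paper itself gives no in-text proof but defers to \cite[Cor.~2.4.7]{evink}, which proves this standard fact along the same lines, so your proposal matches the intended proof.
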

\begin{proof}
A detailed proof of this standard fact is provided in \cite[Cor.~2.4.7]{evink}.
\end{proof}
For an odd prime $p$ write $p^{*}=(-1)^{(p-1)/2}p$, so
$\mathbb{Q}(\sqrt{p^{*}})$ is the quadratic subfield of the cyclotomic field $\mathbb{Q}(\zeta_p)$. 
In what follows we will compute $2$-Selmer groups over these
quadratic fields. One has
\begin{lemma}\label{oddclassnmr}
For any odd prime $p$ the field $K=\mathbb{Q}(\sqrt{p^{*}})$ has odd class number, and if $K$ is real (i.e., $p\equiv 1\bmod 4$) then a fundamental unit of $K$ has norm $-1$.
\end{lemma}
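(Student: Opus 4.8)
The plan is to deduce both assertions at once from classical genus theory, applied not to the ordinary but to the \emph{narrow} class group $\cl^+(K)$. The first step is purely bookkeeping: since $p^{*}=(-1)^{(p-1)/2}p\equiv 1\bmod 4$, the ring of integers of $K=\mathbb{Q}(\sqrt{p^{*}})$ is $\mathbb{Z}[(1+\sqrt{p^{*}})/2]$ and the discriminant of $K$ is exactly $p^{*}$. In particular $p$ is the only ramified prime, so $\mathrm{disc}(K)$ is a prime discriminant and the number $t$ of prime discriminants dividing it equals $1$.

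The second step is to invoke Gauss's genus theory in the form that the genus group $\cl^{+}(K)/\cl^{+}(K)^{2}$ has order $2^{t-1}$, equivalently that the $2$-rank of the narrow class group equals $t-1$. With $t=1$ this forces $\dim_{\mathbb{F}_2}\cl^{+}(K)/2\cl^{+}(K)=0$, so the narrow class number $h^{+}(K)$ is odd. In the imaginary case $p\equiv 3\bmod 4$ the field $K$ has no real places, the narrow and ordinary class groups coincide, and hence $h(K)=h^{+}(K)$ is odd; this already finishes the lemma for $p\equiv 3\bmod 4$.

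For the real case $p\equiv 1\bmod 4$ I would next use the standard comparison of the two class groups coming from the snake lemma on the sign map, namely the exact sequence
\[
1\longrightarrow \mathcal{O}_K^{*}/\mathcal{O}_K^{*,+}\longrightarrow \{\pm 1\}^{2}\longrightarrow \cl^{+}(K)\longrightarrow \cl(K)\longrightarrow 1,
\]
where $\mathcal{O}_K^{*,+}$ denotes the totally positive units. Tracking the image of the units $\langle -1,\varepsilon\rangle$ in $\{\pm 1\}^{2}$ gives the classical dichotomy $h^{+}(K)=h(K)$ when the fundamental unit $\varepsilon$ satisfies $N(\varepsilon)=-1$, and $h^{+}(K)=2\,h(K)$ when $N(\varepsilon)=+1$. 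Since $h^{+}(K)$ was shown to be odd, the even value $2\,h(K)$ is impossible; therefore $N(\varepsilon)=-1$ and $h(K)=h^{+}(K)$ is odd. This establishes both claims of the lemma simultaneously in the real case.

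The only genuine input is genus theory; there is no deep obstacle here. The one point requiring care is that the clean $2$-rank count $t-1$ refers to the narrow class group, so that the odd class number for real $K$ and the sign $N(\varepsilon)=-1$ must be read off together from the narrow–ordinary comparison rather than proved separately. Everything else reduces to the observation that $p^{*}$ is a prime discriminant.
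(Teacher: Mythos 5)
Your proof is correct, and it is the same genus-theory argument that the paper invokes by citation (the paper's proof of Lemma~\ref{oddclassnmr} simply refers to \cite[Thm~2.1]{stevenhagen2018redei}, a genus-theory proof): the observation that $p^{*}$ is a prime discriminant forces the narrow class number to be odd, and the narrow--ordinary comparison sequence then yields both the odd class number and $N(\varepsilon)=-1$ in the real case. No gaps; the one subtle point you flag — that the $2$-rank count $t-1$ applies to the \emph{narrow} class group, so the two conclusions must be extracted together from the exact sequence — is handled correctly.
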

\begin{proof}
For a proof using genus theory, see for example \cite[Thm~2.1]{stevenhagen2018redei}. A slightly more direct argument 
is given in \cite[Appendix~A.2]{evink}.
\end{proof}

\section{Proofs of the rank and Shafarevich-Tate group results}
Consider the genus two hyperelliptic curves 
\[
C/\mathbb{Q}\colon y^2=f(x):=x(x^2-1)(x^2-4),
\]
and, for $p$ any prime number, 
\[ C_p/\mathbb{Q}\colon  y^2=x(x^2-p^2)(x^2-4p^2). \]

Then $C_p$ is a quadratic twist of $C$ over both $\mathbb{Q}(\sqrt{p})$ and $\mathbb{Q}(\sqrt{-p})$. Let $J$ and $J_p$ denote the Jacobians of 
$C$ and $C_p$, respectively. 
Observe that
\begin{equation}\label{eq:rankrelationtwists}
\rank\,J_p(\mathbb{Q})+\rank\,J(\mathbb{Q})=\rank\,J(\mathbb{Q}(\sqrt{\pm p})),
\end{equation}
for both possibilities of the sign $\pm$. A quick computation (Lemma~\ref{lem5.1}) yields $\rank\,J(\mathbb{Q})=0$. Since for the Jacobians at hand
the torsion subgroup yields a subgroup of
the $2$-Selmer group of dimension $4$, 
it follows that
\[\rank\,J_p(\mathbb{Q})\leq
\dim_{\mathbb{F}_2}S^2(J/\mathbb{Q}(\sqrt{\pm p}))-4.\]
Using $\mathbb{Q}(\sqrt{p})$ in case $p\equiv 1,17\bmod 24$ and $\mathbb{Q}(\sqrt{-p})$ for $p\equiv 23\bmod 24$, 
it will be shown that for certain subsets of these primes
the bound for $\rank\,J_p(\mathbb{Q})$ obtained in this way sharpens
the one which follows by directly applying
Proposition~\ref{prop2.1}. Specifically, this 
results in proofs for Theorems \ref{Thm23mod24} - \ref{Thm1mod24}.

\vspace{\baselineskip}\label{NotationDxi}
Label the roots of $f$ as $(\alpha_1,\alpha_2,\alpha_3,\alpha_4,\alpha_5)=(-2,-1,0,1,2)$.
For a field $F\supset\mathbb{Q}$ and a point $(\xi,\eta)\in C(F)$ write $D_{\xi}\in J(F)$ for the point corresponding to the divisor $[(\xi,\eta)]-[\infty]$ on $C$. Note that
although $D_\xi$ depends on $\eta$, its image in
the $2$-Selmer group $S^2(J/F)$ does not.
The image of $J(\mathbb{Q})[2]$ under $\delta$ is spanned by
\[
\begin{array}{c|c c c c c}
& x+2 & x+1 & x & x-1 & x-2 \\
\hline
D_{-2} & 6 & -1 & -2 & -3 & -1 \\
D_{-1} & 1 & -6 & -1 & -2 & -3 \\
D_{0} & 2 & 1 & 1 & -1 & -2 \\
D_{1} & 3 & 2 & 1 & -6 & -1 
\end{array}
\]
Here $x-\alpha_i$ denotes the map 
$[P]-[\infty]\mapsto x(P)-\alpha_i$ as in \eqref{eq:imagedivisors},
compare \cite{schaefer1995}.\\

The local fields for which we need the images 
$\text{im}\,\delta_p$ are $\mathbb{Q}_2$, 
$\mathbb{Q}_3$, $\mathbb{Q}_3(i)$ and $\mathbb{R}$. Much of this was already done in 
\cite[pp.~43-45]{heiden}. One has $\mathbb{Q}_2^{*}/\mathbb{Q}_2^{*2}=\langle -1,2,3\rangle$, $\mathbb{Q}_3^{*}/\mathbb{Q}_3^{*2}=\langle -1,3\rangle$, for $F=\mathbb{Q}_3(i)$ moreover $F^{*}/F^{*2}=\langle 3,r\rangle$, where $r=1+i$, and of course $\mathbb{R}^{*}/\mathbb{R}^{*2}=\langle -1\rangle$. The local images are then spanned as follows.
\begin{equation*}
\begin{array}{c|c c c c c }
\mathbb{Q}_2 & x+2 & x+1 & x & x-1 & x-2 \\
\hline
D_{-2} & 6 & -1 & -2 & -3 & -1 \\
D_{-1} & 1 & -6 & -1 & -2 & -3 \\
D_0 & 2 & 1 & 1 & -1 & -2 \\
D_1 & 3 & 2 & 1 & -6 & -1 \\
D_6 & 2 & -1 & 6 & -3 & 1 \\
D_7 & 1 & 2 & -1 & 6 & -3 
\end{array}
\end{equation*}
\begin{equation*}
\begin{array}{c|c c c c c}
\mathbb{Q}_3 & x+2 & x+1 & x & x-1 & x-2 \\
\hline
D_{-2} & -3 & -1 & 1 & -3 & -1 \\
D_{-1} & 1 & 3 & -1 & 1 & -3 \\
D_0 & -1 & 1 & 1 & -1 & 1 \\
D_4 & 1 & -3 & -1 & 1 & 3 \\
\end{array}
\end{equation*}
\begin{equation*}
\begin{array}{c|c c c c c}
\mathbb{Q}_3(i) & x+2 & x+1 & x & x-1 & x-2 \\
\hline
D_{-2} & 3 & 1 & 1 & 3 & 1 \\
D_{-1} & 1 & 3 & 1 & 1 & 3 \\
D_{i} & r & r & 1 & r & r \\
D_{4+3i} & 3r & 1 & 1 & 3r & 1 \\
\end{array}
\end{equation*}
\begin{equation*}
\begin{array}{c|c c c c c}
\mathbb{R} & x+2 & x+1 & x & x-1 & x-2 \\
\hline
D_{-1} & 1 & -1 & -1 & -1 & -1 \\
D_{0} & 1 & 1 & 1 & -1 & -1 \\
\end{array}
\end{equation*}

\begin{lemma}\label{lem5.1}
We have $\rank\,J(\mathbb{Q})=0$.
\end{lemma}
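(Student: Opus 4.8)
The plan is to show that the $2$-Selmer group $S^2(J/\mathbb{Q})$ has dimension exactly $4$, which via the torsion computation forces the rank to vanish. Since $J(\mathbb{Q})[2]\cong(\mathbb{Z}/2\mathbb{Z})^4$ and, as in the proof of Corollary~\ref{cor2.2}, the full rational torsion of $J(\mathbb{Q})$ is $(\mathbb{Z}/2\mathbb{Z})^4$, the torsion already contributes a $4$-dimensional subspace $J(\mathbb{Q})_{\mathrm{tor}}/2J(\mathbb{Q})_{\mathrm{tor}}$ to $J(\mathbb{Q})/2J(\mathbb{Q})\hookrightarrow S^2(J/\mathbb{Q})$. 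Hence once I establish $\dim_{\mathbb{F}_2}S^2(J/\mathbb{Q})=4$, the exact sequence \eqref{eq:2selmerexact} gives $J(\mathbb{Q})/2J(\mathbb{Q})=J(\mathbb{Q})[2]$, so $\rank\,J(\mathbb{Q})=0$.

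The computation of $S^2(J/\mathbb{Q})$ proceeds exactly by the explicit $2$-descent recipe set up just above the statement. Here $f(x)=x(x^2-1)(x^2-4)$ splits completely over $\mathbb{Q}$ with roots $(\alpha_1,\dots,\alpha_5)=(-2,-1,0,1,2)$, so $A\cong\mathbb{Q}^{\oplus 5}$ and the norm map is coordinatewise multiplication. The set of bad primes is $S=\{\infty,2,3\}$ since $\Delta(f)$ is supported on $2$ and $3$; because $\mathbb{Q}$ has class number $1$, Proposition~\ref{sunitbasis} identifies $\mathbb{Q}(S)$ with $\langle -1,2,3\rangle$. I would first write down the ``algebraic'' Selmer group, namely the kernel of the product map on $\bigoplus_{i=1}^{5}\mathbb{Q}(S)\to\mathbb{Q}(S)$ where each coordinate lives in $\langle -1,2,3\rangle$, cut out further by the requirement that $x-\alpha_i$ and $x-\alpha_j$ generate a subgroup compatible with the relations $\alpha_i-\alpha_j$ (equivalently, that each candidate pairs correctly against the differences of roots). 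Then I intersect with the local conditions: the global class must land in $\im(\delta_{\mathfrak{p}})$ for each $\mathfrak{p}\in S=\{\infty,2,3\}$, and the spanning sets for $\im\,\delta_2$, $\im\,\delta_3$, and $\im\,\delta_{\mathbb{R}}$ are exactly the tables displayed above (the field $\mathbb{Q}_3(i)$ is only needed over the quadratic fields, not here).

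Concretely I would intersect the images step by step: starting from the global image of $J(\mathbb{Q})[2]$ (the $4$-dimensional span in the first table), I check that this space already satisfies all the local conditions at $2$, $3$, and $\infty$, and then verify that no further global class outside this span can be locally everywhere in the image. The local image at each place has dimension $\dim_{\mathbb{F}_2}\im\,\delta_{\mathfrak{p}}=\tfrac12\dim_{\mathbb{F}_2}J(\mathbb{Q}_{\mathfrak{p}})/2J(\mathbb{Q}_{\mathfrak{p}})$, which by the formula $\#J(K_v)/2J(K_v)=|2|_v^{-2}\cdot 16$ recalled in Section~\ref{NotandRes} equals $4$ for $\ell=3$ and for $\mathbb{R}$, and $6$ for $\ell=2$; these dimensions are reflected in the row counts of the tables. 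The main obstacle is the bookkeeping at $\mathbb{Q}_2$: the prime $2$ both divides the discriminant and contributes the enlarged local image (six generators rather than four), so it is the place most likely to admit spurious locally-trivial classes, and the crux of the argument is verifying that the local-at-$2$ condition, combined with those at $3$ and $\infty$, is restrictive enough to collapse $S^2(J/\mathbb{Q})$ down to the $4$-dimensional torsion span with nothing left over.
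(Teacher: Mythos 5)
Your overall strategy is the paper's: reduce to showing $\dim_{\mathbb{F}_2}S^2(J/\mathbb{Q})=4$ via the $4$-dimensional torsion contribution, take $S=\{2,3,\infty\}$ and $K(S)=\langle -1,2,3\rangle$, and cut the kernel of the norm map down by the local conditions encoded in the displayed tables. The genuine gap is that you stop exactly where the proof has to begin: the statement that ``no further global class outside this span can be locally everywhere in the image'' \emph{is} the content of the lemma, and you announce it as ``the crux of the argument'' without carrying it out. The paper executes this in two short steps, some version of which you need. First, since $-1$, $2$, $3$ remain independent in $\mathbb{Q}_2^{*}/\mathbb{Q}_2^{*2}$, the localization map $K(S)^{\oplus 5}\to(\mathbb{Q}_2^{*}/\mathbb{Q}_2^{*2})^{\oplus 5}$ is injective, so $S^2(J/\mathbb{Q})$ is faithfully detected by its $2$-adic image and splits as $A\oplus\delta(J(\mathbb{Q})[2])$ with $A$ mapping $2$-adically into the span of $(2,-1,6,-3,1)$ and $(1,2,-1,6,-3)$. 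Second, the $3$-adic table forces the middle coordinate $e_3$ of any $x\in A$ to have even $3$-valuation, which rules out any contribution of the generator whose middle entry is $6$; the remaining candidate $(1,2,-1,6,-3)$ is then excluded because its $3$-adic image does not lie in $\im(\delta_3)$, so $A=0$. Since all conditions are $\mathbb{F}_2$-linear, your brute-force intersection is in principle a finite computation, but as written your text is a plan for a proof, not a proof.

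Two further slips suggest the local bookkeeping was not actually done. The identity $\dim_{\mathbb{F}_2}\im(\delta_{\mathfrak{p}})=\tfrac12\dim_{\mathbb{F}_2}J(\mathbb{Q}_{\mathfrak{p}})/2J(\mathbb{Q}_{\mathfrak{p}})$ is false: $\delta_{\mathfrak{p}}$ is injective, so the two dimensions are equal (and indeed your stated values $4$ at $\ell=3$ and $6$ at $\ell=2$ contradict your own halving). More seriously, at the real place the image has dimension $2$, not $4$: the formula $\#J(K_v)/2J(K_v)=|2|_v^{-2}\cdot 16$ quoted in Section~\ref{NotandRes} is for nonarchimedean completions, while over $\mathbb{R}$ one has $\#J(\mathbb{R})/2J(\mathbb{R})=\#J(\mathbb{R})[2]/2^g=4$, matching the two-row $\mathbb{R}$-table. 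These errors are harmless as long as you genuinely use the displayed tables (and in fact the paper's argument never needs the condition at $\infty$), but relying on your stated dimensions instead would distort the count. Finally, your extra requirement that candidates ``pair correctly against the differences of roots'' is not part of the algebraic Selmer group: besides the hyperplane (norm) condition, the only constraints are the local ones; the differences $\alpha_i-\alpha_j$ enter solely in computing $\delta$ on the two-torsion, as in the first table.
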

\begin{proof}
It suffices to show $\dim_{\mathbb{F}_2}S^2(J/\mathbb{Q})=4$. Note $\Delta(f)=2^{10}\cdot 3^{4}$, so $S=\{2,3,\infty\}$ and $K(S)=\langle -1,2,3\rangle$. Then $S^2(J/\mathbb{Q})$ injects into the $2$-adic image, and
\[
S^2(J/\mathbb{Q})=A\oplus\delta(J(\mathbb{Q})[2])
\]
where $A$ consists of all $x\in S^2(J/\mathbb{Q})$ with 
$2$-adic image in the span of
\[
\begin{array}{c c c c c c c}
 ( & 2, & -1, & 6, & -3, &  1   & ),\\
  ( & 1, & 2, & -1, & 6, & -3 & ).
\end{array}
\]
If $x=(e_1,\dotsc,e_5)\in A$, then the $3$-adic image forces $e_3\in\langle -1\rangle$, hence $x$ is in the span of $(1,2,-1,6,-3)$. Therefore $x$ is trivial because
 $(1,2,-1,6,-3)\not\in\im(\delta_3)$. Thus $A=0$ and $S^2(J/\mathbb{Q})$ has $\mathbb{F}_2$-dimension $4$. 
\end{proof}

We now compute $S^2(J/\mathbb{Q}(\sqrt{-p}))$ for $p\equiv 23\bmod 24$ 
and $S^2(J/\mathbb{Q}(\sqrt{p}))$ for $p\equiv 1,17\bmod 24$. The computation follows \cite[\S~3.4.2-4]{evink}, 
except that R\'edei symbols are used instead of
various reciprocity arguments in {\sl loc.\ sit.}\\

Consider a prime $p\equiv 23\bmod 24$ and let $K=\mathbb{Q}(\sqrt{-p})$. Then $K$ is complex and both $2$ and $3$ split in $K$, so as set $S$ of
places of $K$ needed for embedding
$S^2(J/K)$ in $\oplus_{i=1}^5K(S)$ we take
the four primes dividing $6$. The completion of
$K$ at a prime in $S$ equals $\mathbb{Q}_2$ or $\mathbb{Q}_3$.\\
Write $\mathfrak{p}_3,\mathfrak{q}_3$ for the prime ideals in $\mathcal{O}_K$ dividing $3$ and let $k_3$ be the order of $[\mathfrak{p}_3]$ in $\cl_K$. 
Then $\mathfrak{p}_3^{k_3}=(x_3)$ for some $x_3\in \mathcal{O}_K$. 
Since $\mathfrak{q}_3\nmid (x_3)$ and 
$K_{\mathfrak{q}_3}=\mathbb{Q}_3$, this
$x_3$ maps to $\pm 1$ in $K_{\mathfrak{q}_3}^{*}/K_{\mathfrak{q}_3}^{*2}$. Multiplying $x_3$ by $-1$ if necessary, we may and will assume that $x_3$ is a square in
$K_{\mathfrak{q}_3}$. 
The conjugate $y_3\in\mathcal{O}_K$ of $x_3$ satisfies  $\mathfrak{q}_3^{k_3}=(y_3)$ and $x_3y_3=3^{k_3}$.\\
Let $\mathfrak{p},\mathfrak{q}$ be the prime
ideals in $\mathcal{O}_K$ over $2$. In the
$\mathfrak{p}$-adic completion, $x_3$ and $y_3$ 
yield elements of $\langle -1,3\rangle\subset\mathbb{Q}_2^{*}/\mathbb{Q}_2^{*2}$. By Lemma~\ref{oddclassnmr} the order $k_3$ of
$[\mathfrak{p}_3]\in\cl_K$ is odd, so the product 
$x_3y_3$ yields $3\in \mathbb{Q}_2^{*}/\mathbb{Q}_2^{*2}$. Hence exactly one of $x_3, y_3$ after
$\mathfrak{p}$-adic completion has image $1$ or $-3$ in 
$\mathbb{Q}_2^{*}/\mathbb{Q}_2^{*2}$. As $\im_{\mathfrak{p}}(y_3)=\im_{\mathfrak{q}}(x_3)$, 
this implies that $x_3$ maps into $\langle -3\rangle \subset\mathbb{Q}_2^{*}/\mathbb{Q}_2^{*2}$ for precisely one of $\mathfrak{p},\mathfrak{q}$.
Denote this ideal by $\mathfrak{p}_2$, 
then $\mathfrak{p}_2$ is unramified in $K(\sqrt{x_3})$.\\
Let $x_2\in\mathfrak{p}_2$ be a generator for $\mathfrak{p}_2^{k_2}$, with $k_2$ the order of $[\mathfrak{p}_2]$. As above, multiplying $x_2$ by
$-1$ if necessary we may and will assume that $x_2$ maps $\mathfrak{q}_2$-adically into $\langle -3\rangle\subset\mathbb{Q}_2^{*}/\mathbb{Q}_2^{*2}$, where $\mathfrak{q}_2$ is the conjugate of $\mathfrak{p}_2$. Let $y_2$ be the conjugate of $x_2$, so $\mathfrak{q}_2^{k_2}=(y_2)$ and $x_2y_2=2^{k_2}$.\\

Proposition~\ref{sunitbasis} implies $K(S)=\langle -1,x_2,y_2,x_3,y_3\rangle$. We collect the local images in $K_{\mathfrak{p}}^{*}/K_{\mathfrak{p}}^{*2}$ of these generators, for $\mathfrak{p}\in S=\{\mathfrak{p}_2,\mathfrak{q}_2,\mathfrak{p}_3,\mathfrak{q}_3\}$, as follows.
\begin{equation}\label{redeitable23}
\begin{array}{c|c c c c}
 & \mathfrak{p}_2 & \mathfrak{q}_2 & \mathfrak{p}_3 & \mathfrak{q}_3 \\
\hline
-1 & -1 & -1 & -1 & -1 \\
x_2 & \cellcolor[gray]{0.65} & \cellcolor[gray]{0.65} & \cellcolor[gray]{0.8} & \cellcolor[gray]{0.8} \\
y_2 & \cellcolor[gray]{0.65} & \cellcolor[gray]{0.65} & \cellcolor[gray]{0.8} & \cellcolor[gray]{0.8} \\
x_3 & \cellcolor[gray]{0.8} & \cellcolor[gray]{0.8} & 3 & 1 \\
y_3 & \cellcolor[gray]{0.8} & \cellcolor[gray]{0.8} & 1 & 3 
\end{array}
\end{equation}
For $l\in\{2,3\}$ recall $\im_{\mathfrak{p}}(x_l)=\im_{\mathfrak{q}}(y_l)$ for conjugate $\mathfrak{p}$ and $\mathfrak{q}$ in $S$ and $x_ly_l=l^{k_l}$ with $k_l$ odd. Hence the  $2\times2$-block in the table corresponding to $x_l,y_l$ and conjugate $\mathfrak{p}, \mathfrak{q}$ is determined by any one entry in the block.\\

The normal closure of $K(\sqrt{x_2})/\mathbb{Q}$ yields a minimally ramified extension over $\mathbb{Q}(\sqrt{-2p})$, as $\mathfrak{p}_2$ is unramified in $K(\sqrt{y_2})$. 
Hence $y_2$ is $\mathfrak{p}_2$-adically a square if and only if $[-p,2,2]=1$. 
Thus the top left block in (\ref{redeitable23}) is determined by the R\'edei symbol $[2,2,-p]$. 
As suggested by the coloring, the two blocks away from the diagonal are both determined by the same R\'edei symbol. To see this, note that the normal closure of $K(\sqrt{x_3})/\mathbb{Q}$ yields a minimally ramified extension of $\mathbb{Q}(\sqrt{-3p})$. 
This extension has trivial inertia degree over $3$, hence $\im_{\mathfrak{p}_2}(x_3)=1$ if and only if $[-p,3,6]=1$. Similarly, the normal closure of
$K(\sqrt{x_2y_3})/\mathbb{Q}$ yields a minimally ramified extension of
$\mathbb{Q}(\sqrt{-6p})$.
Since $\im_{\mathfrak{p}_3}(y_3)=1$, 
this implies $\im_{\mathfrak{p}_3}(x_2)=1$ if and only if $[-p,6,3]=1$. 
Hence Table~(\ref{redeitable23}) is determined by the values of the two R\'edei symbols $[2,2,-p]$ and $[3,6,-p]$.
Below, the four possibilities for this pair
of symbols will be considered.

\begin{remark}
Since $(3,2)_3=-1$, the similar statement `$[-p,2,3]=[-p,3,2]$' cannot be used to show that the two lighter gray blocks in 
Table~(\ref{redeitable23}) are determined by the same R\'edei symbol. This is the reason for the workaround with the bottom right block. However, the proof of
$[a,b,c]=[a,b,c]$ relies on the product formula for quadratic Hilbert symbols in $\mathbb{Q}(\sqrt{a})$; there is nothing 
against using this product formula in $\mathbb{Q}(\sqrt{-p})$. 
Here the identity $\prod_{\mathfrak{p}}(x_2,x_3)_{\mathfrak{p}}=1$ leads to 
$\im_{\mathfrak{p}_2}(x_3)=1 \Leftrightarrow\im_{\mathfrak{p}_3}(x_2)=1$, but one still needs the symbol $[3,6,-p]$ to link the two blocks to splitting behaviour of primes in a \textit{fixed} (i.e., not depending on $p$) number field.
\end{remark}

For the Selmer group computations, observe that $S^2(J/K)=A\oplus \im(J(K)[2])$ for
\[
A=\{(e_1,\dotsc,e_5)\in S^2(J/K):e_3\xmapsto{\mathfrak{p}_3} 1\text{ and }e_4\xmapsto{\mathfrak{p}_2} 1\}.
\]
First consider the case $[2,2,-p]=[3,6,-p]=1$,
which means the table is as follows.
\begin{equation*}
\begin{array}{c|c c c c}
 & \mathfrak{p}_2 & \mathfrak{q}_2 & \mathfrak{p}_3 & \mathfrak{q}_3 \\
\hline
-1 & -1 & -1 & -1 & -1 \\
x_2 & \cellcolor[gray]{0.65}2 & \cellcolor[gray]{0.65}1 & \cellcolor[gray]{0.8}1 & \cellcolor[gray]{0.8}-1 \\
y_2 & \cellcolor[gray]{0.65}1 & \cellcolor[gray]{0.65}2 & \cellcolor[gray]{0.8}-1 & \cellcolor[gray]{0.8}1 \\
x_3 & \cellcolor[gray]{0.8}1 & \cellcolor[gray]{0.8}3 & 3 & 1 \\
y_3 & \cellcolor[gray]{0.8}3 & \cellcolor[gray]{0.8}1 & 1 & 3 
\end{array}
\end{equation*}
Let $x=(e_1,\dotsc,e_5)\in A$. The
$\mathfrak{p}_3$-adic and $\mathfrak{q}_3$-adic image implies
$e_3\in\langle x_2,-y_2\rangle$,
and therefore $\im_{\mathfrak{p}_2}(e_3)\subset\langle -1,2\rangle$ and $\im_{\mathfrak{q}_2}(e_3)\subset\langle -2\rangle$. 
This removes the fifth row of the $\mathbb{Q}_2$-table from consideration. As $\im_{\mathfrak{p}_2}(e_4)=1$, one concludes $\im_{\mathfrak{p}_2}(x)$ is in the span of
\begin{equation*}
\begin{array}{r c c c c c l}
(&6, & 1, & -1, & 1, & -6 &), \\
(&6, & 3, & -2, & 1, & -1 &).
\end{array}
\end{equation*}
Together with $\im_{\mathfrak{p}_3}(e_2)\subset\langle -1\rangle$ this gives $e_2\in\langle y_2,y_3\rangle$. 
Next, $\im_{\mathfrak{q}_2}(e_2)\subset\langle 2\rangle$ and $\im_{\mathfrak{q}_2}(e_3)\subset\langle -2\rangle$ implies  $\im_{\mathfrak{q}_2}(x)$ is in the span of
\begin{equation*}
\begin{array}{r c c c c c l}
(&2, & 1, & 1, & -1, & -2 &), \\
(&3, & 2, & 1, & -6, & -1 &),
\end{array}
\end{equation*}
so $e_3\in\langle x_2\rangle$. Since $n=(1,y_3,x_2,1,x_2y_3)\in A$,
a complement inside $A$ of $\langle n\rangle$ is obtained by setting $e_3=1$. For $x$
in this complement $\im_{\mathfrak{p}_2}(x)$ is trivial, hence $e_i\in\langle y_2,x_3\rangle$ for all $i$, implying $\im_{\mathfrak{q}_2}(x)$ is in the span of $(6,2,1,6,2)$.
Then $e_1,e_4\in\langle y_2x_3\rangle$ and $e_2,e_5\in\langle y_2\rangle$. A nontrivial $\im_{\mathfrak{q}_2}(x)$ can only occur
for $e_1,e_2,e_4,e_5$ all $\neq 1$, so this complement is at most one dimensional. Since $(y_2x_3,y_2,1,y_2x_3,y_2)\in A$ 
one concludes that $A$ is two-dimensional, and $\dim_{\mathbb{F}_2}S^2(J/K)=6$.\\

In the remaining three cases (i.e.,  $[2,2,-p]$ and $[3,6,-p]$ not both $1$) 
the computation is analogous; for details see \cite[\S~3.4.2-4]{evink}. The results are as follows. 
\begin{equation*}
\begin{array}{c|c|c|c}
\hspace{-4pt}{ }[2,2,-p]\hspace{-4pt}{ } & 
{ }\hspace{-4pt}[3,6,-p]\hspace{-4pt}{ } & 
{ }\hspace{-4pt}
\dim_{\mathbb{F}_2}S^2(J/K)\hspace{-2pt}{ } & \text{additional generators} \\
\hline
1 & 1 & 6 & (1,y_3,x_2,1,x_2y_3),(y_2x_3,y_2,1,y_2x_3,y_2) \\
1 & -1 & 6 & (-y_2y_3,y_2,1,-y_2y_3,y_2),(1,-y_3,y_2,1,-y_2y_3) \\
-1 & 1 & 4 &  \text{ none }\\
-1 & -1 & 4 &  \text{ none } \\
\end{array}
\end{equation*}
With this one proves 
Theorem~\ref{Thm23mod24}:
\begin{proof}[Proof of Theorem~\ref{Thm23mod24}.]
Let $p\equiv 23\bmod 48$ be prime. Then $p\equiv 7\bmod 16$ so Example~\ref{redeiexample1}
shows $[2,2,-p]=-1$. The table above implies $\dim_{\mathbb{F}_2}S^2(J/\mathbb{Q}(\sqrt{-p}))=4$
and as a consequence $\rank\,J(\mathbb{Q}(\sqrt{-p}))=0$. Hence $\rank\,J_p(\mathbb{Q})=0$ by
equation~\eqref{eq:rankrelationtwists}. Since
$p\equiv 17\bmod 24$, Proposition~\ref{prop2.1} 
yields $\dim_{\mathbb{F}_2}S^2(J_p/\mathbb{Q})=6$ hence
the exact sequence \eqref{eq:rankshasel} shows $\Sh(J_p/\mathbb{Q})[2]\cong(\mathbb{Z}/2\mathbb{Z})^2$.
\end{proof}

\begin{remark}
Part of what is proven above is that $\dim_{\mathbb{F}_2}S^2(J/\mathbb{Q}(\sqrt{-p}))$ for primes $p\equiv 23\bmod 24$ depends only on the values of $[2,2,-p]$ and $[3,6,-p]$. Hence instead of the
provided calculations for an undetermined $p\equiv 23\bmod 24$
one may take a fixed prime for each of the four possibilities for
the pair of R\'{e}dei symbols, and use e.g. Magma \cite{magma} to compute the Selmer group for this prime. The smallest primes covering all cases are given in the table below.
\[
\begin{array}{c|c|c}
p & [2,2,-p] & [3,6,-p] \\
\hline
    191 & 1 & 1 \\
    47 & 1 & -1 \\
    167 & -1 & 1 \\
    23 & -1 & -1
\end{array}
\]
We use Magma in this way to obtain proofs of Theorems~\ref{Thm17mod24} and \ref{Thm1mod24}.
\end{remark}

\begin{proposition}\label{prop5.4}
For $K=\mathbb{Q}(\sqrt{p})$ with $p\equiv 17\bmod 24$ prime, $\dim_{\mathbb{F}_2}S^2(J/K)$ is completely determined by the R\'edei symbols $[2,2,p]$ and $[2,-1,p]$.
\end{proposition}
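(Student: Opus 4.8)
The plan is to mirror exactly the structure of the $p\equiv 23\bmod 24$ computation that was just carried out, but now over the real field $K=\mathbb{Q}(\sqrt{p})$. First I would set up the arithmetic of $K$: by Lemma~\ref{oddclassnmr} the field has odd class number and (since $p\equiv 1\bmod 4$) a fundamental unit $\varepsilon$ of norm $-1$. For $p\equiv 17\bmod 24$ both $2$ and $3$ split in $K$, so $S$ again consists of the four primes over $6$, each completion being $\mathbb{Q}_2$ or $\mathbb{Q}_3$. Using Proposition~\ref{sunitbasis} I would write $K(S)=\langle -1,\varepsilon,x_2,y_2,x_3,y_3\rangle$, where $x_\ell,y_\ell$ are generators of appropriate powers of the conjugate primes over $\ell\in\{2,3\}$ normalized (by multiplying with $\pm1$ and by the unit if needed) exactly as in the complex case, and $x_\ell y_\ell=\ell^{k_\ell}$ with $k_\ell$ odd.

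Next I would build the analogue of Table~\eqref{redeitable23}, recording the local images of all six generators in $K_{\mathfrak{p}}^*/K_{\mathfrak{p}}^{*2}$ for $\mathfrak{p}\in\{\mathfrak{p}_2,\mathfrak{q}_2,\mathfrak{p}_3,\mathfrak{q}_3\}$. As before the conjugacy relation $\im_{\mathfrak{p}}(x_\ell)=\im_{\mathfrak{q}}(y_\ell)$ together with $x_\ell y_\ell=\ell^{k_\ell}$ (odd exponent) makes each $2\times2$ conjugate block determined by a single entry. The key identification step is to express those entries through R\'edei symbols: taking normal closures of $K(\sqrt{x_2})$, $K(\sqrt{x_3})$ and a suitable product over $\mathbb{Q}(\sqrt{2p})$, $\mathbb{Q}(\sqrt{3p})$, $\mathbb{Q}(\sqrt{6p})$ respectively, minimal ramification lets me read off the $2$-adic blocks as $[2,2,p]$ and the off-diagonal $2$-$3$ blocks as $[3,6,p]$, precisely as in the $23\bmod 24$ argument. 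The new feature over the real field is the fundamental unit $\varepsilon$ and the real infinite places; I would record the image of $\varepsilon$ in the local tables and note that, because $N(\varepsilon)=-1$, its $2$-adic images are constrained, and its interaction with the Hilbert symbol $(2,p)_2$ (equivalently the symbol $[2,-1,p]$) governs the extra row. This is where $[2,-1,p]$ enters: it measures whether $\varepsilon$, or equivalently $-1$ relative to $p$, is a local square at the primes over $2$, which is exactly the datum not already captured by $[2,2,p]$.

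With the table expressed in terms of $[2,2,p]$, $[2,-1,p]$ (and possibly $[3,6,p]$), I would run the same linear-algebra elimination as in the complex case: decompose $S^2(J/K)=A\oplus\im(J(K)[2])$, parametrize $A$ by the hyperplane (product-trivial) condition together with the local-image constraints $\im(\delta_{\mathfrak{p}})$ at the four primes and at the real places, and count dimensions case by case over the possible values of the governing symbols. The expected output is a short table giving $\dim_{\mathbb{F}_2}S^2(J/K)$ as a function of the symbol values, from which the Proposition follows. The main obstacle I anticipate is twofold: first, correctly pinning down which R\'edei symbols suffice—showing that $[3,6,p]$ either does not vary independently or does not affect the dimension, so that $[2,2,p]$ and $[2,-1,p]$ alone determine the answer—and second, handling the fundamental unit and the real archimedean local images cleanly, since these have no counterpart in the complex computation and are the most likely source of an off-by-one in the dimension count. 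As the surrounding text indicates, once the table is reduced to these two symbols one may also simply verify the finitely many cases on representative primes in Magma, which is the fallback I would use to confirm the hand computation.
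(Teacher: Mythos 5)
Your overall strategy (tabulate the local images of an explicit basis of $K(S)$, show every entry of the table is governed by R\'edei symbols, then run the linear-algebra elimination or verify representative primes in Magma) is indeed the paper's strategy, and your treatment of the prime $2$ and the archimedean places is essentially right: the paper proves $\im_{\mathfrak{p}_2}(y_2)=1\Leftrightarrow[p,2,2]=1$, $\im_{\mathfrak{p}_2}(\varepsilon)=1\Leftrightarrow[p,-1,2]=1$, and $\im_{\sigma_1}(x_2)=1\Leftrightarrow[p,2,-1]=1$, with R\'edei reciprocity $[p,-1,2]=[p,2,-1]$ tying the unit row to the archimedean entries exactly as you anticipate. (A small normalization point: in the real case one adjusts $x_2$ by $\pm\varepsilon$, not merely by $\pm1$, to get positive norm and $\mathfrak{q}_2$ unramified in $K(\sqrt{x_2})$.)

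The genuine gap is at the prime $3$. For $p\equiv 17\bmod 24$ one has $p\equiv 2\bmod 3$, so $3$ is \emph{inert} in $K=\mathbb{Q}(\sqrt{p})$, not split as you assert. Consequently $S=\{\mathfrak{p}_2,\mathfrak{q}_2,(3),\sigma_1,\sigma_2\}$ contains only three finite primes, $K(S)=\langle -1,\varepsilon,x_2,y_2,3\rangle$ is $5$-dimensional (there are no generators $x_3,y_3$), and the completion at $(3)$ is $\mathbb{Q}_3(i)$ --- which is precisely why the paper tabulated $\im(\delta)$ over $\mathbb{Q}_3(i)$ with $F^{*}/F^{*2}=\langle 3,r\rangle$, $r=1+i$. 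Your plan to control ``$2$--$3$ off-diagonal blocks'' via $[3,6,p]$ cannot be carried out: that symbol is not even defined here, since the Hilbert condition fails at $3$, namely $(3,p)_3=\left(\tfrac{p}{3}\right)=-1$. In the paper the $3$-adic column requires no R\'edei symbol at all: the images of $\varepsilon$, $x_2$, $y_2$ are all forced to equal $r$ by the observation that $3\mathbb{Z}$ has inertia degree $4$ in the normal closures of $K(\sqrt{\varepsilon})$ and $K(\sqrt{x_2})$ over $\mathbb{Q}$. This is exactly why only $[2,2,p]$ and $[2,-1,p]$ remain; your anticipated difficulty of ``eliminating'' $[3,6,p]$ is a symptom of the wrong splitting assumption rather than a real obstacle. (That difficulty is real only in the $p\equiv 1\bmod 24$ case, where $3$ does split and the paper's Proposition~\ref{prop5.5} indeed invokes $[3,-2,p]$ and $[3,6,p]$.)
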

\begin{proof}
Let $\sigma_1,\sigma_2:K\hookrightarrow\mathbb{R}$ be the two real embeddings of $K$. Take a fundamental unit $\varepsilon\in\mathcal{O}_K^*$ with $\sigma_1(\varepsilon)>0$. 
Lemma~\ref{oddclassnmr} implies $\varepsilon\overline{\varepsilon}=-1$, hence there is a unique prime ideal $\mathfrak{p}_2\subset \mathcal{O}_K$ over $2$ that is unramified in $K(\sqrt{\varepsilon})$. Let $\mathfrak{q}_2$ be the conjugate
of $\mathfrak{p}_2$ and write $\mathfrak{p}_2^k=(x_2)$ where $k$ is the order of $[\mathfrak{p}_2]$ in $Cl_K$. Multiplying $x_2$ by $\pm\varepsilon$ if necessary we can
and will assume that $x_2$ has positive norm and moreover $\mathfrak{q}_2$ is unramified in $K(\sqrt{x_2})$. Let $y_2$ be the conjugate of $x_2$, so $x_2y_2=2^k$.
Put $S=\{\mathfrak{p}_2,\mathfrak{q}_2,(3),\sigma_1,\sigma_2\}$, then $K(S)=\langle -1,\varepsilon,x_2,y_2,3\rangle$.
The table of images in $K_v^*/{K_v^*}^2$ of the generators of $K(S)$ is as follows
(as before, $r^2=2i\in\mathbb{Q}_3(i)$).
\begin{equation*}
\begin{array}{c|c c c c c}
 & \mathfrak{p}_2 & \mathfrak{q}_2 & (3) & \sigma_1 & \sigma_2 \\
\hline
-1 & -1 & -1 & 1 & -1 & -1 \\
\varepsilon & \cellcolor[gray]{0.65} & \cellcolor[gray]{0.65} & r & 1 & -1 \\
x_2 & \cellcolor[gray]{0.8} & \cellcolor[gray]{0.8} & r & \cellcolor[gray]{0.65} & \cellcolor[gray]{0.65} \\
y_2 & \cellcolor[gray]{0.8} & \cellcolor[gray]{0.8} & r & \cellcolor[gray]{0.65} & \cellcolor[gray]{0.65} \\
3 & 3 & 3 & 3 & 1 & 1 \\
\end{array}
\end{equation*}
The $3$-adic images of $\varepsilon,x_2,y_2$ 
follow by observing that the inertia degree
of $3\mathbb{Z}$ in the normal closures of $K(\sqrt{x_2})$ and $K(\sqrt{\varepsilon})$ over $\mathbb{Q}$ equals $4$. 
As $\mathfrak{p}_2$ is unramified in $K(\sqrt{\varepsilon})$ and in $K(\sqrt{y_2})$, the normal closures over $\mathbb{Q}$ yield minimally ramified extensions. Hence $\im_{\mathfrak{p}_2}(\varepsilon)=1
\Leftrightarrow [p,-1,2]=1$ and $\im_{\mathfrak{p}_2}(y_2)=1
\Leftrightarrow [p,2,2]=1$ and $\im_{\sigma_1}(x_2)=1\Leftrightarrow [p,2,-1]=1$. 
R\'edei reciprocity completes the proof.
\end{proof}

Aided by Magma for the rightmost column,
one computes the following table.
\[
\begin{array}{c|c|c|c}
p & [2,2,p] & [2,-1,p] & \dim_{\mathbb{F}_2}S^2(J/\mathbb{Q}(\sqrt{p})) \\
\hline
    113 & 1 & 1 & 6\\
    17 & 1 & -1 & 4\\
    41 & -1 & 1 & 4 \\
    89 & -1 & -1 & 6
\end{array}
\]
From the above, Theorem~\ref{Thm17mod24}
readily follows:
\begin{proof}[Proof of Theorem~\ref{Thm17mod24}]
Take $p\equiv 17\bmod 24$ prime and put
$K=\mathbb{Q}(\sqrt{p})$.
Proposition~\ref{prop5.4} and the table
above show
$\dim_{\mathbb{F}_2}S^2(J/K)=4
\Leftrightarrow [2,2,p][2,-1,p]=-1$.
Tri-linearity of the R\'edei symbol implies that the latter condition is equivalent to $[2,-2,p]=-1$, which by Example~\ref{Ex3.6}
means $p$ is not completely split in $\mathbb{Q}(\sqrt[4]{2})$.
As remarked earlier, 
$\dim_{\mathbb{F}_2}S^2(J/K)=4
\Rightarrow \rank\, J(K)=0
\Leftrightarrow \rank\,J_p(\mathbb{Q})=0$.
Proposition~\ref{prop2.1}
and the exact sequence \eqref{eq:rankshasel}
now finish the proof.
\end{proof}

Lastly we cover the case $p\equiv 1\bmod 24$.
\begin{proposition}\label{prop5.5}
For $K=\mathbb{Q}(\sqrt{p})$ with $p\equiv 1\bmod 24$ prime, $\dim_{\mathbb{F}_2}S^2(J/K)$ is completely determined by the R\'edei symbols $[2,2,p],\;[2,-1,p],\;[3,-2,p]\text{ and }[3,6,p]$.
\end{proposition}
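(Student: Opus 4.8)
The plan is to follow the proof of Proposition~\ref{prop5.4} essentially verbatim until the splitting of $3$ forces new features. Since $p\equiv 1\bmod 24$ we have $p\equiv 1\bmod 8$, so $2$ splits in $K=\mathbb{Q}(\sqrt p)$, and $p\equiv 1\bmod 3$, so $3$ splits as well; write $2\mathcal{O}_K=\mathfrak{p}_2\mathfrak{q}_2$ and $3\mathcal{O}_K=\mathfrak{p}_3\mathfrak{q}_3$, and let $\sigma_1,\sigma_2$ be the two real embeddings. By Lemma~\ref{oddclassnmr} the class number is odd and a fundamental unit $\varepsilon$ has norm $-1$; fix $\sigma_1(\varepsilon)>0$. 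Taking $S=\{\mathfrak{p}_2,\mathfrak{q}_2,\mathfrak{p}_3,\mathfrak{q}_3,\sigma_1,\sigma_2\}$, Proposition~\ref{sunitbasis} gives the six-dimensional space $K(S)=\langle -1,\varepsilon,x_2,y_2,x_3,y_3\rangle$, where $x_\ell$ generates $\mathfrak{p}_\ell^{k_\ell}$ with $k_\ell$ the odd order of $[\mathfrak{p}_\ell]$ in $\cl_K$, where $y_\ell$ is the conjugate of $x_\ell$, and where $x_\ell y_\ell=\ell^{k_\ell}$. Exactly as in Proposition~\ref{prop5.4} I would use $N(\varepsilon)=-1$ to single out the prime over $2$, named $\mathfrak{p}_2$, that is unramified in $K(\sqrt\varepsilon)$. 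I would then rescale $x_2$ by a factor in $\{\pm\varepsilon,\pm 1\}$ so that it has positive norm and $\mathfrak{q}_2$ is unramified in $K(\sqrt{x_2})$, and rescale $x_3$ by $\pm\varepsilon$ so that it has positive norm, leaving the sign $x_3\mapsto -x_3$ free.

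Next I would assemble the $6\times 6$ table of local images in $K_v^{*}/K_v^{*2}$ for $v\in S$; every completion is $\mathbb{Q}_2$, $\mathbb{Q}_3$ or $\mathbb{R}$, whose images $\im(\delta_v)$ are the ones tabulated before Lemma~\ref{lem5.1}. Many entries are forced: the row of $-1$; the real signs of $\varepsilon,x_2,x_3$, coming from $N(\varepsilon)=-1$ and $x_\ell y_\ell=\ell^{k_\ell}>0$; and the uniformizer entries of $x_2$ at $\mathfrak{p}_2$ and $x_3$ at $\mathfrak{p}_3$. Moreover, since at a prime over $3$ every unit already generates an unramified extension, the residual sign freedom $x_3\mapsto -x_3$ lets one normalise the three-adic block of $x_3,y_3$ to the concrete block with $\im_{\mathfrak{p}_3}(x_3)=3$ and $\im_{\mathfrak{q}_3}(x_3)=1$, exactly as in~\eqref{redeitable23}. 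Together with the conjugation identities $\im_{\mathfrak{p}}(x_\ell)=\im_{\mathfrak{q}}(y_\ell)$ and the relations $x_\ell y_\ell=\ell^{k_\ell}$, this collapses each conjugate block to a single bit and leaves only four unknowns: the two-adic image of $\varepsilon$, the two-adic block of $x_2,y_2$, the three-adic image of $\varepsilon$, and the mixed block recording the three-adic image of $x_2,y_2$ (equivalently, by the product formula, the two-adic image of $x_3,y_3$).

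The heart of the matter is to express these four bits through the four listed symbols. Each is an Artin symbol of a prime of $K$ in a normal closure $K(\sqrt\gamma)/\mathbb{Q}$ that, after the normalisations above, is minimally ramified over a suitable $\mathbb{Q}(\sqrt d)$ with $d\in\{\pm 2p,\pm 3p,\pm 6p\}$, hence equals a Rédei symbol of the form $[p,\ast,\ast]$ that reciprocity rewrites in the target shape. As in Proposition~\ref{prop5.4}, the two-adic block of $x_2,y_2$ becomes $[2,2,p]$, while the two-adic image of $\varepsilon$ and the real sign of $x_2$ both become $[2,-1,p]$. The two three-adic unknowns are the delicate ones: taken separately, the image of $\varepsilon$ and the mixed block would be the symbols $[3,-1,p]$ and $[3,2,p]$, neither of which is defined, because $(3,-1)_3=(3,2)_3=-1$—this is precisely the obstruction recorded in the Remark following~\eqref{redeitable23}.

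The main obstacle is therefore to reach these two bits through defined symbols only. The plan is to route the mixed block through $\mathbb{Q}(\sqrt{6p})$, using the normal closure of an extension such as $K(\sqrt{x_2y_3})$, so that it is governed by the defined symbol $[3,6,p]$—precisely the role played by $[3,6,-p]$ for $p\equiv 23\bmod 24$—and then to package the three-adic image of $\varepsilon$ into the defined symbol $[3,-2,p]$ via $\mathbb{Q}(\sqrt{-2p})$ and tri-linearity; this last step is genuinely new, as $\varepsilon$ has no analogue in the complex case treated for $p\equiv 23\bmod 24$. Writing $b_{-1},b_2$ for the two three-adic bits, what must be checked is that the linear conditions cutting $S^2(J/K)$ out of $\bigoplus_{i=1}^{5}K(S)$ see these bits only through the defined symbol $[3,6,p]$, which after the normalisation of the $x_3,y_3$ block pins down $b_2$, and the defined symbol $[3,-2,p]$, which then pins down $b_{-1}$—never through the undefined symbols $[3,-1,p],[3,2,p],[3,3,p]$ in isolation. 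Granting this, the entire table, and hence $\dim_{\mathbb{F}_2}S^2(J/K)$, is a function of $[2,2,p]$, $[2,-1,p]$, $[3,-2,p]$ and $[3,6,p]$; as in the discussion following Proposition~\ref{prop5.4}, the dimension in each realisable sign pattern can then be read off from a single Magma computation per case, which is what feeds the proof of Theorem~\ref{Thm1mod24}.
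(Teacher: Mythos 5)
You have the right skeleton (same $S$, same basis $\langle -1,\varepsilon,x_2,y_2,x_3,y_3\rangle$ of $K(S)$, the same four symbols, one Magma run per sign pattern), but your normalisation at $3$ diverges from the paper's in a way that leaves a genuine gap at exactly the step you call ``the heart of the matter''. You spend the residual sign freedom $x_3\mapsto -x_3$ on the $3$-adic block, fixing $\im_{\mathfrak{q}_3}(x_3)=1$ and $\im_{\mathfrak{p}_3}(x_3)=3$. After that choice the $2$-adic block of $x_3,y_3$ carries \emph{two} unknown bits, not one: $\im_{\mathfrak{p}_2}(x_3)$ ranges over $\{1,-1,3,-3\}$, constrained only by $\im_{\mathfrak{p}_2}(x_3)\,\im_{\mathfrak{q}_2}(x_3)=3$; it is not a single bit tied to $\im_{\mathfrak{p}_3}(x_2)$ by the product formula alone, since the relevant Hilbert products also involve the full $2$-adic classes of $x_2$ and $x_3$. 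Moreover, with the sign already spent at $3$, nothing guarantees that $\mathfrak{p}_2$ is unramified in $K(\sqrt{x_3})$, nor that $\mathfrak{q}_2$ is unramified in $K(\sqrt{x_2y_3})$; so the normal closures through which you want to route the mixed block need not satisfy the hypotheses that identify their Artin symbols with the fixed R\'edei symbols $[3,6,p]$ and $[3,-2,p]$, and the dictionary you need is asserted rather than available. You also omit the real signs of $x_3,y_3$ from your list of unknowns, although the local conditions at $\sigma_1,\sigma_2$ enter the Selmer computation; in the paper this bit is read off from $[p,3,-2]$, and only because $\im_{\mathfrak{p}_2}(x_3)=1$ is known \emph{beforehand}, so that the finite (norm-$2$) part of that symbol is trivial and only the infinite part survives. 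Your closing ``Granting this'' concedes precisely the content of the proposition.

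The paper closes these gaps with two specific devices your plan lacks. First, $\mathfrak{p}_3$ is not labelled arbitrarily: it is defined as the prime over $3$ that splits in $K(\sqrt{x_2})$, which makes the mixed middle block \emph{known} ($\im_{\mathfrak{p}_3}(x_2)=1$, $\im_{\mathfrak{q}_3}(x_2)=-1$) and lets $[p,-2,3]$ read off $\im_{\mathfrak{p}_3}(\varepsilon)=\im_{\mathfrak{p}_3}(\varepsilon x_2)$ directly; in your setup $\im_{\mathfrak{p}_3}(x_2)$ is itself unknown, so your $[3,-2,p]$-step for $\varepsilon$ is entangled with the very bit you are trying to pin via $[3,6,p]$. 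Second, the sign of $x_3$ is spent at $2$ (so that $\mathfrak{p}_2$ is unramified in $K(\sqrt{x_3})$), and then R\'edei reciprocity $[p,6,3]=[p,3,6]$ is used to pin the $2$-adic entries of $x_3,y_3$ \emph{exactly} ($\im_{\mathfrak{q}_2}(y_3)=1$, hence the entries $1,3$ and $3,1$), leaving as the single unknown the $3$-adic block of $x_3,y_3$, which equals $[3,6,p]$ --- the opposite distribution of known and unknown entries from your normalisation, and the one for which the minimal-ramification prerequisites and symbol evaluations actually go through. As written, your proposal is a plausible plan whose crucial verifications are missing, and with the normalisation at $3$ as you chose it the plan as stated does not yield the claimed dictionary.
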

\begin{proof}
Let $p\equiv 1\bmod 24$ be prime and put
$K=\mathbb{Q}(\sqrt{p})$. As in the proof of
Proposition~\ref{prop5.4} let $\sigma_1,\sigma_2:K\hookrightarrow\mathbb{R}$ be the real embeddings, take a fundamental unit 
$\varepsilon\in\mathcal{O}_K$ with $\sigma_1(\varepsilon)>0$, 
let $\mathfrak{p}_2$ be the prime over $2$ that is unramified in $K(\sqrt{\varepsilon})$, 
and denote the conjugate of
$\mathfrak{p}_2$ by $\mathfrak{q}_2$. Then $\mathfrak{p}_2^{k_2}=(x_2)$ with $k_2=\mathrm{ord}([\mathfrak{p}_2])$, where one chooses 
$x_2\in\mathcal{O}_K$ of positive norm and such that $\mathfrak{q}_2$ is unramified in $K(\sqrt{x_2})$.\\
Let $\mathfrak{p}_3$ be the prime over $3$ that splits in $K(\sqrt{x_2})$, and let $\mathfrak{q}_3$ be its conjugate. 
With $k_3=\mathrm{ord}([\mathfrak{p}_3])$, write $\mathfrak{p}_3^{k_3}=(x_3)$ with $x_3\in\mathcal{O}_K$ of positive norm, chosen so that $\mathfrak{p}_2$ is unramified in $K(\sqrt{x_3})$. For $i\in\{2,3\}$ let $y_i$ be the conjugate of $x_i$, so $x_iy_i=i^{k_i}$. 
Put $S=\{\mathfrak{p}_2,\mathfrak{q}_2,\mathfrak{p}_3,\mathfrak{q}_3,\sigma_1,\sigma_2\}$, then $K(S)=\langle -1,\varepsilon,x_2,y_2,x_3,y_3\rangle\subset K^*/{K^*}^2$. 
Information on local images  of $K(S)$ is presented in the following table.
\begin{equation*}
\begin{array}{c|c c c c c c}
 & \mathfrak{p}_2 & \mathfrak{q}_2 & \mathfrak{p}_3 & \mathfrak{q}_3 & \sigma_1 & \sigma_2 \\
\hline
-1 & -1 & -1 & -1 & -1 & -1 & -1 \\
\varepsilon & \cellcolor[gray]{0.4} & \cellcolor[gray]{0.4} & \cellcolor[gray]{0.55} & \cellcolor[gray]{0.55} & 1 & -1 \\
x_2 & \cellcolor[gray]{0.7} & \cellcolor[gray]{0.7} & 1 & -1 & \cellcolor[gray]{0.4} & \cellcolor[gray]{0.4} \\
y_2 & \cellcolor[gray]{0.7} & \cellcolor[gray]{0.7} & -1 & 1 & \cellcolor[gray]{0.4} & \cellcolor[gray]{0.4} \\
x_3 & 1 & 3 & \cellcolor[gray]{0.85} & \cellcolor[gray]{0.85} & \cellcolor[gray]{0.55} & \cellcolor[gray]{0.55} \\ 
y_3 & 3 & 1 & \cellcolor[gray]{0.85} & \cellcolor[gray]{0.85} & \cellcolor[gray]{0.55} & \cellcolor[gray]{0.55} 
\end{array}
\end{equation*}
To see this, first consider the bottom middle $2\times 2$ block. Note that $[p,6,3]=1$ if and only if $\im_{\mathfrak{p}_3}(y_3)=\im_{\mathfrak{p}_3}(x_2y_3)=1$,
and similarly $[p,3,6]=1$ precisely when the equivalence $\im_{\mathfrak{q}_2}(y_3)=1$ $\Leftrightarrow$ $\im_{\mathfrak{p}_3}(y_3)=1$ holds. Since $[p,6,3]=[p,3,6]$, this implies $\im_{\mathfrak{q}_2}(y_3)=1$ and moreover $\im_{\mathfrak{p}_3}(y_3)=1$ if and only if $[p,6,3]=1$.
The choice of $x_3$ and the equality $x_3y_3=3^{k_3}$ for $k_3=\ord([\mathfrak{p}_3])$ odd, implies the bottom left block.
The remaining assertions about the table (in particular:
the regions colored in the same shade of grey are determined
by any one entry in that region) are straightforward and/or
analogous to what we did in other 
$\bmod~24$ cases.\\
As in the $17\bmod 24$ case,  $\im_{\mathfrak{p}_2}(\varepsilon)=1$ $\Leftrightarrow$ $[p,-1,2]=1$, and $\im_{\sigma_1}(x_2)=1$ $\Leftrightarrow$ $[p,2,-1]=1$, and $\im_{\mathfrak{p}_2}(y_2)=1$ $\Leftrightarrow$ $[p,2,2]=1$.\\
Finally, $\im_{\mathfrak{p}_3}(\varepsilon)=\im_{\mathfrak{p}_3}(\varepsilon x_2)=1$ precisely when $[p,-2,3]=1$. 
Since $\im_{\mathfrak{p}_2}(x_3)=1$, one has $\im_{\sigma_1}(x_3)=1\Leftrightarrow [p,3,-2]=1$. 
R\'edei reciprocity finishes the proof.
\end{proof}
Using Magma for the rightmost column results in the following table (in fact implying a
stronger version of Proposition~\ref{prop5.5}:
$\dim_{\mathbb{F}_2}S^2(J/\mathbb{Q}(\sqrt{p}))$ for the primes
$p\equiv 1\bmod 24$ only depends on the R\'edei symbols
$[2,2,p],\,[2,-1,p]$, and $[3,-2,p]$).
\[
\begin{array}{c|c|c|c|c|c}
p & [2,2,p] & [2,-1,p] & [3,-2,p] & [3,6,p] & \dim_{\mathbb{F}_2}S^2(J/\mathbb{Q}(\sqrt{p})) \\
\hline
    2593 & 1 & 1 & 1 & 1 & 8\\
    1153 & 1 & 1 & 1 & -1 & 8\\
    337 & 1 & 1 & -1 & 1 & 4\\
    557 & 1 & 1 & -1 & -1 & 4\\
    433 & 1 & -1 & 1 & 1 & 4\\
    97 & 1 & -1 & 1 & -1 & 4\\
    241 & 1 & -1 & -1 & 1 & 6\\
    193 & 1 & -1 & -1 & -1 & 6\\
    1321 & -1 & 1 & 1 & 1 & 6\\
    409 & -1 & 1 & 1 & -1 & 6\\
    1129 & -1 & 1 & -1 & 1 & 4\\
    313 & -1 & 1 & -1 & -1 & 4\\
    937 & -1 & -1 & 1 & 1 & 6\\
    1033 & -1 & -1 & 1 & -1 & 6\\
    73 & -1 & -1 & -1 & 1 & 4\\
    601 & -1 & -1 & -1 & -1 & 4\\
\end{array}
\]

\begin{proof}[Proof of Theorem~\ref{Thm1mod24}]
Let $p\equiv 1\bmod 24$ be prime and put
$K=\mathbb{Q}(\sqrt{p})$. Proposition~\ref{prop2.1} implies $\dim_{\mathbb{F}_2}S^2(J_p/\mathbb{Q})=8$, 
hence as in the proofs of
Theorems~\ref{Thm23mod24} and \ref{Thm17mod24} 
it suffices to show that $\dim_{\mathbb{F}_2}S^2(J/K)=4$
if $p$ satisfies one of the conditions (a), (b), or (c)
mentioned in the statement of Theorem~\ref{Thm1mod24}.
\\
Note: $p$ splits completely in $\mathbb{Q}(\sqrt[4]{2}) \Leftrightarrow [2,2,p][2,-1,p]=[2,-2,p]=1$.
Also,  $p$ splits completely in $\mathbb{Q}(\sqrt{1+\sqrt{3}})\Leftrightarrow [3,-2,p]=1$,
and $[2,2,p]=1\Leftrightarrow p\equiv 1\bmod 16$. Hence condition (a) corresponds to the cases
$p\in\{73, 337, 557, 601\}$ in the table above.
Condition (b) corresponds to $p\in\{97, 433\}$ in the table,
and condition~(c) to $p\in\{313, 1129\}$.
In all these cases the table shows $\dim_{\mathbb{F}_2}S^2(J/K)=4$, hence the result follows
by using Proposition~\ref{prop5.5}.
\end{proof}

We finish this section by presenting an analogous result for elliptic curves; we restrict to $p\equiv 1\bmod 24$ but 
in the same spirit one obtains similar statements for the other congruence classes
$p\bmod 24$.

\begin{proposition}\label{ellRedeiexample}
Let $E/\mathbb{Q}$ be an elliptic curve with good reduction 
away from $2, 3$ and with $E(\mathbb{Q})[2]=E(\overline{\mathbb{Q}})[2]$. For a prime $p\equiv 1\bmod 24$, the size of the $2$-Selmer group $S^2(E/\mathbb{Q}(\sqrt{p}))$ is determined by $E/\mathbb{Q}$ together with the Rédei symbols
\[
[2,2,p],\;[2,-1,p],\;[3,-2,p],\;[3,6,p].
\]
\end{proposition}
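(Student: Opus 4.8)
The plan is to run exactly the $2$-descent machinery from the proof of Proposition~\ref{prop5.5}, checking that the sole $p$-dependent input is once more packaged into the four named R\'edei symbols. I would begin by recording the arithmetic of $K=\mathbb{Q}(\sqrt{p})$: since $p\equiv 1\bmod 24$, both $2$ and $3$ split in $K$, the field is real, and by Lemma~\ref{oddclassnmr} it has odd class number with a fundamental unit $\varepsilon$ of norm $-1$. The hypothesis $E(\mathbb{Q})[2]=E(\overline{\mathbb{Q}})[2]$ makes the defining cubic split completely, so the completely-split form of the descent applies with three coordinates. Fixing an integral model $y^2=(x-e_1)(x-e_2)(x-e_3)$, good reduction of $E$ away from $2,3$ lets one choose the $e_i$ so that the discriminant $\prod_{i<j}(e_i-e_j)^2$ is supported on $\{2,3\}$; hence each difference $e_i-e_j$ represents a class in $\langle -1,2,3\rangle\subset\mathbb{Q}^{*}/\mathbb{Q}^{*2}$ and the finite part of the descent set $S$ is exactly the four primes $\mathfrak{p}_2,\mathfrak{q}_2,\mathfrak{p}_3,\mathfrak{q}_3$ over $2$ and $3$. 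Adjoining the two real places, this is precisely the set $S$ occurring in Proposition~\ref{prop5.5}, and Proposition~\ref{sunitbasis} then yields the same basis $K(S)=\langle -1,\varepsilon,x_2,y_2,x_3,y_3\rangle$.

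Next I would phrase $S^2(E/K)$ as a finite linear-algebra problem in the style of \eqref{eq:finselmalg}: it is the set of triples in $\ker\big(\bigoplus_{i=1}^{3}K(S)\to K(S)\big)$ whose local image at each $v\in S$ lands in $\im(\delta_v)\subset\bigoplus_{i=1}^{3}K_v^{*}/K_v^{*2}$. This problem has only two ingredients, namely the local conditions $\im(\delta_v)$ and the evaluation maps $K(S)\to K_v^{*}/K_v^{*2}$ recording the local images of the six generators (and hence of the fixed torsion classes $e_i-e_j$); the whole argument reduces to showing that each ingredient is controlled by $E$ together with the four symbols.

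For the local conditions I would argue that they depend only on $E$: because $2,3$ split and $K$ is real, every completion $K_v$ for $v\in S$ is one of $\mathbb{Q}_2,\mathbb{Q}_3,\mathbb{R}$, and since $E$ is defined over $\mathbb{Q}$ the subspace $\im(\delta_v)$ is literally the local $2$-descent image of $E$ over that completion, independent of $p$. For the evaluation maps I would invoke the table built in the proof of Proposition~\ref{prop5.5}: its entries for $-1,\varepsilon,x_2,y_2,x_3,y_3$ at $\mathfrak{p}_2,\mathfrak{q}_2,\mathfrak{p}_3,\mathfrak{q}_3,\sigma_1,\sigma_2$ are pinned down by $[2,2,p]$, $[2,-1,p]$, $[3,-2,p]$ and $[3,6,p]$, and since $e_i-e_j\in\langle -1,2,3\rangle=\langle -1,x_2y_2,x_3y_3\rangle$ the torsion images are read off the same table. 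Granting these, both ingredients are functions of $E$ and the four symbols, so the dimension of the solution space, that is $\#S^2(E/K)$, is as well, which is the assertion.

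The main obstacle I anticipate is one of bookkeeping rather than of new mathematics: I must verify that the Proposition~\ref{prop5.5} table is genuinely a statement about the $S$-units of $K=\mathbb{Q}(\sqrt{p})$ alone, so that it can be quoted verbatim here. Concretely, each table entry was obtained from the splitting of $p$ in some \emph{fixed}, $p$-independent dihedral extension of degree $8$ via the minimal-ramification and R\'edei-reciprocity results of Section~3; I would need to confirm that none of those derivations secretly used a feature of the genus-two Jacobian. Since they are manifestly statements about quadratic and biquadratic extensions of $K$ and their local behaviour at $2$ and $3$, I expect this to go through, leaving the elliptic case as a clean transcription of the genus-two computation with $\im(\delta_v)$ replaced by the corresponding elliptic-curve local images.
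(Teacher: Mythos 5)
Your proposal is correct and takes essentially the same route as the paper's proof: both reduce $S^2(E/K)$ to a finite linear-algebra problem inside $\ker\bigl(\bigoplus_{i=1}^3 K(S)\to K(S)\bigr)$ with $K(S)=\langle -1,\varepsilon,x_2,y_2,x_3,y_3\rangle$, quote verbatim the table of local images from the proof of Proposition~\ref{prop5.5} (which is determined by the four R\'edei symbols), and note that the local conditions $\im(\delta_v)$ depend only on $E$ because every completion $K_v$ for $v\in S$ is $\mathbb{Q}_2$, $\mathbb{Q}_3$, or $\mathbb{R}$. Your extra verifications (that good reduction away from $2,3$ puts the classes $e_i-e_j$ in $\langle -1,2,3\rangle=\langle -1,x_2y_2,x_3y_3\rangle$, and that the table concerns only $S$-units of $K$, not the genus-two Jacobian) simply make explicit details the paper leaves implicit.
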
 
\begin{proof}
We use the notation introduced in the proof of
Proposition~\ref{prop5.5}.
Descent yields an embedding
\[
\delta\colon E(K)/2E(K)\hookrightarrow{}\ker\left(\bigoplus_{i=1}^3K(S)\to K(S)\right)
\]
and $S^2(E/K)$ consists of the elements in
$\bigoplus_{i=1}^3K(S)$ that locally are in the image of the
corresponding maps $\delta_v$, for all
$v\in S=\{\sigma_1,\sigma_2,\mathfrak{p}_2,\mathfrak{q}_2,
\mathfrak{p}_3,\mathfrak{q}_3\}$.
For these $v$, the image in $K_v^*/{K_v^*}^2$ of a basis
for $K(S)$ is described in the table presented in the proof of
Proposition~\ref{prop5.5}. As this table is determined
by the four given
R\'edei symbols and $S^2(E/K)$ consists of
triples of elements in $K(S)$ that 
for $v\in S$ locally are in $\delta_v(E(K_v))$, the result follows.
\end{proof}
\begin{remark}
The finite list of elliptic curves satisfying the conditions from
Proposition~\ref{ellRedeiexample} was already presented in the
PhD thesis of F.B.~Coghlan \cite{Coghlan1966}. In fact he listed
{\em all} elliptic curves over $\mathbb{Q}$ having good reduction away
from $2$ and $3$. Precisely $28$ of these have full rational
$2$-torsion. In the LMFDB tables \cite{lmfdb} contain them under
the conductors
$
\left\{ 24,\; 32,\; 48,\; 64,\; 72,\; 96,\; 144,\; 192,\; 288,\; 576  \right\}
$.
\end{remark} 
\section{The $\mathbb{Q}$-rational points}\label{Sect6}

Here briefly $\mathbb{Q}$-rational points of the curves
$C_p$ are discussed. The proof of Corollary~\ref{cor2.2}
shows that for primes $p$ such that
$\rank\,J_p(\mathbb{Q})=0$, the set $C_p(\mathbb{Q})$
consists of the Weierstrass points only. Below a less
immediate case is discussed, namely a situation
with $\rank\,J_p(\mathbb{Q})=2$. We remark that in this case $\rank\,J_p(\mathbb{Q})$ is not strictly smaller than the genus of $C_p$ so the standard Chabauty method does not apply.

\vspace{\baselineskip}

Take the prime $p=241$. 
Using $\rank\,J_p(\mathbb{Q})\leq
\dim_{\mathbb{F}_2}S^2(J/\mathbb{Q}(\sqrt{p})))-4$,
the row $p=241$ in the table preceding the proof of Theorem~\ref{Thm1mod24} yields $\rank\,J_{241}(\mathbb{Q})\leq 2$. 
The Mumford representations
\begin{align*}
P=&\left(x^2 - \tfrac{868230159329}{1782528400}x + \tfrac{8609056225}{4456321},
             \tfrac{83127269153329233}{75258349048000}x
              - \tfrac{8905877454269565}{37629174524}\right),\\
     Q=&\left(x^2 - \tfrac{692452}{3721}x + \tfrac{73966756}{3721},
             \tfrac{6990522627}{2269810}x + \tfrac{1284886465269}{1134905}\right)
\end{align*}
turn out to
define points in $J_{241}(\mathbb{Q})$. The homomorphism
$\delta\colon J_p(\mathbb{Q})\to S^2(J_p/\mathbb{Q})$ yields
$\delta(P)=(2,p,1,p,2)$ and $\delta(Q)=(1,p,p,p,p)$. These images
are independent of $\delta(J_p(\mathbb{Q})_{\mbox{\scriptsize tor}})$
which is generated by $(6,-p,-2p,-3p,-p)$, $(p,-6,-p,-2p,-3p)$, $(2p,p,1,-p,-2p)$ and $(3p,2p,p,-6,-p)$.
Hence $\rank\,J_{241}(\mathbb{Q})=2$. Moreover by
Proposition~\ref{prop2.1} and equality \eqref{eq:rankshasel}
one concludes $\Sh(J_{241}/\mathbb{Q})[2]\cong(\mathbb{Z}/2\mathbb{Z})^2$.

To determine $C_{241}(\mathbb{Q})$ the methods developed in
\cite{BrSt} will now be used. Although this works in much greater
generality, here it is only briefly recalled in the special case
of the curves $C_p$. Consider the composition
\[
C_p(\mathbb{Q})\longrightarrow J_p(\mathbb{Q})
\stackrel{\delta}{\longrightarrow} S^2(J_p/\mathbb{Q})
\]
mapping $(a,b)\in C_p(\mathbb{Q})$ with $b\neq 0$ to
$(a+2p,a+p,a,a-p,a-2p)\in S^2(J_p/\mathbb{Q})$.
Is $s=(e_1,\ldots,e_5)\in S^2(J_p/\mathbb{Q})$, then being in the
image of $C_p(\mathbb{Q})$ implies that one has a rational point
on the smooth, complete curve $X_s/\mathbb{Q}$ corresponding to the 
affine equations
\[
x+2p=e_1y_1^2,\;x+p=e_2y_2^2,\;
x=e_3y_3^2,\;x-p=e_4y_4^2,\;x-2p=e_5y_5^2.
\]
Here by abuse of notation $e_j$ represents the class $e_j\in\mathbb{Q}^*/{\mathbb{Q}^*}^2$; the result is independent of this representing element. The curve $X_s$ is what in \cite{BrSt} is called a two-cover of $C_p$ over $\mathbb{Q}$.
The ``Two-Selmer set'' of $C_p/\mathbb{Q}$ is
\[
\left\{ s\in S^2(J_p/\mathbb{Q})\;:\;
X_s\;\text{has rational points everywhere locally}
\right\}.
\]
As an example, for $p=241$ let $s:=\delta(P)=(2,p,1,p,2)$. Among the equations
for $X_s$ one has $x+2p=2y_1^2$ and $x-p=py_4^2$, defining the conic 
$Q\colon 2y_1^2-py_4^2=3p$. One obtains a finite
morphism $X_s\to Q$ defined over $\mathbb{Q}$.
Since $Q(\mathbb{Q}_2)$ (as well as $Q(\mathbb{Q}_3)$) is empty, this shows $\delta(P)$
is not in the Two-Selmer set of $C_p/\mathbb{Q}$.
In other words: although $\delta(P)$ is everywhere
locally (even globally!) in $\delta_v(J_p(\mathbb{Q}_v))$, it is not in the
image of $C_p(\mathbb{Q}_2)\subset J_p(\mathbb{Q}_2)$.

The Magma command {\tt TwoCoverDescent();} computes
the curves $X_s$ corresponding to the Two-Selmer set. In our case it turns out that of the $2^8$
elements in $S^2(J_{241}/\mathbb{Q})$, only the
six $\delta([W]-[\infty])$ for $W\in C_p(\mathbb{Q})$ a Weierstrass point, are in the
Two-Selmer set. We now show that for each of these
six elements $s$ one finds that
$\left\{R\in C_p(\mathbb{Q})\;:\;\delta([R]-[\infty])
=s\right\}$ consists of only a Weierstrass point.
As a consequence, $C_{241}(\mathbb{Q})=\{\infty,\,
(0,0),\,(\pm 241,0), (\pm 482,0)\}$. We use the notation
$D_\xi$ (here for certain elements in $J_p$) as introduced
on page~\pageref{NotationDxi}.\\
\begin{itemize}
    \item $s:=\delta(0)=(1,1,1,1,1)$.
    If $(a,b)\in C_{241}(\mathbb{Q})$ with $b\neq 0$
    would result in $\delta$-image $s$,
    then in particular the elliptic curve 
    $E_1\colon y^2=x(x+p)(x+2p)$ admits a point
    in $E_1(\mathbb{Q})$ with $x=a$ and $y\neq 0$.
    Since $E_1(\mathbb{Q})\cong \mathbb{Z}/2\mathbb{Z}\times \mathbb{Z}/2\mathbb{Z}$, no such point exists.
    \item $s:=\delta(D_{-2p})=(6,-p,-2p,-3p,-p)$.
    In this case, considering the 1st, 3rd, and 4th
    entry results in the elliptic curve
    $E_2\colon y^2=x(x+2p)(x-p)$ satisfying
    $E_2(\mathbb{Q})\cong \mathbb{Z}/2\mathbb{Z}\times \mathbb{Z}/2\mathbb{Z}$. Hence only
    the Weierstrass point $(-482,0)\in C_p(\mathbb{Q})$ yields $\delta$-image $s$.
    \item $s:=\delta(D_{-p})=(p,-6,-p,-2p,-3p)$.
    Here the 2nd, 4th, and 5th entry results in
    the elliptic curve $E_3\colon -y^2=(x+p)(x-p)(x-2p)$ whose only rational points
    are the points of order at most $2$. Reasoning
    as before, this implies that only
    the Weierstrass point $(-241,0)\in C_p(\mathbb{Q})$ yields $\delta$-image $s$.
    
    \item $s:=\delta(D_0)=(2p,p,1,-p,-2p)$.
    Using entries 1, 2, and 3 results in the
    elliptic curve $E_4\colon 2y^2=x(x+p)(x+2p)$,
    whose only rational points
    are the points of order at most $2$. 
    As above, this implies that only
    the Weierstrass point $(0,0)\in C_p(\mathbb{Q})$ yields $\delta$-image $s$.
    \item $s:=\delta(D_{p})=(3p,2p,p,-6,-2p)$.
    Here we use entries 1, 2, and 4, leading
    to $E_5\colon -y^2=(x+2p)(x+p)(x-p)$.
    Also here the only rational points are the points of order dividing $2$. So 
    only
    the Weierstrass point $(241,0)\in C_p(\mathbb{Q})$ yields $\delta$-image $s$.
    \item $s:=\delta(D_{2p})=(p,3p,2p,p,6)$.
    Using entries 1, 2, and 5 one obtains
    $E_6\colon 2y^2=(x+2p)(x+p)(x-2p)$. Here as well, the only rational points 
    are the points of order dividing $2$. So 
     $(482,0)\in C_p(\mathbb{Q})$ is the
    only rational point with $\delta$-image $s$.
\end{itemize}
This completes the determination of the rational
points on $C_{241}$.\\
Note that for $p=5$ there are two additional points: one has $\#C_5(\mathbb{Q})=8$, where the two non-Weierstrass points are $(20,\pm 1500)$. Applying Chabauty's method implies that there are no other points.\\
It may be possible to extend the method described here and in this
way answer the question whether a prime $p>5$ exists
such that $\#C_p(\mathbb{Q})>6$. 

As a final remark, recall that the two-cover $X:=X_{(1,1,1,1,1)}$ of $C_p/\mathbb{Q}$
corresponds to the affine model
\[
x+2p=y_1^2,\;x+p=y_2^2,\;
x=y_3^2,\;x-p=y_4^2,\;x-2p=y_5^2.
\]
The maps $y_j\mapsto -y_j$ define a group
$(\mathbb{Z}/2\mathbb{Z})^5$ in 
$\text{Aut}_{\mathbb{Q}}(X)$. Using
appropriate subgroups one obtains up to
isogeny the decomposition of $\text{Jac}(X)$ over $\mathbb{Q}$ given as follows.
Let $E_{24}\colon y^2=(x-1)(x^2-4)$ and $E_{32}\colon y^2=x^3-x$ and finally 
$E_{96a}\colon y^2=x(x+1)(x-2)$ be elliptic
curves over $\mathbb{Q}$. For any such
$E/\mathbb{Q}$ and any $d\in\mathbb{Q}/{\mathbb{Q}^*}^2$
we write $E^{(d)}$ for the quadratic twist
of $E$ defined by $d$.
Then $\text{Jac}(X)$ is isogenous over
$\mathbb{Q}$ to the product
\[
J_p \times \left(E_{24}\right)^2 \times
E_{24}^{(-1)} \times E_{24}^{(p)} \times 
E_{24}^{(-p)} \times
\left(E_{32}^{(p)}\right)^3 \times E_{32}^{(2p)}
\times
\left(E_{96a}^{(-2)}\right)^2 \times
\left(E_{96a}^{(p)}\right)^2 \times
\left(E_{96a}^{(-p)}\right)^2.
\]
In particular the rank of $\text{Jac}(X)$ is determined
by that of $J_p$ and of the given twists of the 
three elliptic curves
$E_{24}$, $E_{32}$, and $E_{96a}$.
Using analogs of Proposition~\ref{ellRedeiexample}
for various classes of primes $p$ provides
a natural approach towards bounding
$\rank\,\text{Jac}(X)(\mathbb{Q})$.

\section*{Acknowledgement}
Several people provided valuable suggestions during this work. We mention 
Peter Stevenhagen who in the context of \cite{StrTo1994}
already in the 90's showed one of us
 relations
with $4$-ranks of class groups, and much more recently referred us to his work on R\'edei symbols.
We also mention Nils Bruin who explained
one of us the Magma implementation of the
elliptic Chabauty method, originally while
working on \cite[Section~3]{ChaTop} but 
 also very relevant for the much simpler situation given in Section~\ref{Sect6} 
 of the present paper.
Michael~Stoll directed us to the theory of Two-Selmer sets and envisioned the result
given in Section~\ref{Sect6}.
Finally, Steffen M\"{u}ller,  Jeroen~Sijsling, and Marius~van~der~Put all
showed their interest in this work and in that way encouraged us to complete it.
\printbibliography

\end{document}